\documentclass{amsart}

\usepackage[utf8]{inputenc}
\usepackage{amsmath}
\usepackage{amsthm}
\usepackage{amsfonts}
\usepackage{amssymb}

\usepackage{fullpage}
\usepackage{mathtools}
\usepackage{stmaryrd}
\usepackage{mathrsfs}

\usepackage{listings}
\usepackage{bbm}
\usepackage{accents}
\usepackage{tikz}
\usepackage{tikz-cd}
\usepackage{quiver}
\usepackage{caption}

\usepackage{multirow}

\usepackage{adjustbox}
\usepackage{censor}

\usepackage{enumitem}

\usepackage{epigraph}

\usepackage[bookmarks,hidelinks,pdfencoding=unicode]{hyperref}
\usepackage[capitalize]{cleveref}
\usepackage{animate}

\newtheorem{thm}{Theorem}[section]

\newtheorem{prop}[thm]{Proposition}
\newtheorem{lem}[thm]{Lemma}

\newtheorem{fact}[thm]{Fact}

\newtheorem{quest}[thm]{Question}

\theoremstyle{definition}
\newtheorem{defn}[thm]{Definition}

\theoremstyle{remark}

\newcommand{\Rb}{\mathbb{R}}

\newcommand{\Zb}{\mathbb{Z}}
\newcommand{\Qb}{\mathbb{Q}}

\newcommand{\Fc}{\mathcal{F}}

\newcommand{\Rc}{\mathcal{R}}

\newcommand{\Lc}{\mathcal{L}}

\newcommand{\Kc}{\mathcal{K}}

\newcommand{\Cb}{\mathbb{C}}
\newcommand{\Fb}{\mathbb{F}}

\newcommand{\mfr}{\mathfrak{m}}

\newcommand{\onebf}{\mathbf{1}}

\makeatletter \DeclareRobustCommand{\cset}{\@ifstar\star@cset\normal@cset}
\newcommand{\star@cset}[1]{{\left\llbracket#1\right\rrbracket}}
\newcommand{\normal@cset}[2][]{{\mathopen{#1\llbracket}#2\mathclose{#1\rrbracket}}}
\makeatother

\newcommand{\DCF}{\ifmmode\mathsf{DCF}_0\else$\mathsf{DCF}_0$\fi}
\newcommand{\CODF}{\ifmmode\mathsf{CODF}\else$\mathsf{CODF}$\fi}
\newcommand{\ODF}{\ifmmode\mathsf{ODF}\else$\mathsf{ODF}$\fi}
\newcommand{\ACF}{\ifmmode\mathsf{ACF}\else$\mathsf{ACF}$\fi}

\DeclareMathOperator{\acl}{acl}

\DeclareMathOperator{\dom}{dom}

\newcommand{\IZF}{\ifmmode\mathsf{IZF}\else$\mathsf{IZF}$\fi}
\newcommand{\CZF}{\ifmmode\mathsf{CZF}\else$\mathsf{CZF}$\fi}
\newcommand{\ZF}{\ifmmode\mathsf{ZF}\else$\mathsf{ZF}$\fi}
\newcommand{\ZFC}{\ifmmode\mathsf{ZFC}\else$\mathsf{ZFC}$\fi}
\newcommand{\AC}{\ifmmode\mathsf{AC}\else$\mathsf{AC}$\fi}
\newcommand{\AD}{\ifmmode\mathsf{AC}\else$\mathsf{AD}$\fi}
\newcommand{\BZ}{\ifmmode\mathsf{BZ}\else$\mathsf{BZ}$\fi}
\newcommand{\GCH}{\ifmmode\mathsf{GCH}\else$\mathsf{GCH}$\fi}
\newcommand{\PA}{\ifmmode\mathsf{PA}\else$\mathsf{PA}$\fi}
\newcommand{\DC}{\ifmmode\mathsf{DC}\else$\mathsf{DC}$\fi}
\newcommand{\MP}{\ifmmode\mathsf{MP}\else$\mathsf{MP}$\fi}
\newcommand{\CT}{\ifmmode\mathsf{CT}\else$\mathsf{CT}$\fi}
\newcommand{\PAx}{\ifmmode\mathsf{PAx}\else$\mathsf{PAx}$\fi}
\newcommand{\VL}{\ifmmode{\mathsf{V}=\mathsf{L}}\else$\mathsf{V}=\mathsf{L}$\fi}

\DeclareMathOperator{\Sh}{Sh}

\newlength{\savedparindent}
\AtBeginDocument{\setlength{\savedparindent}{\parindent}}

\def\Ind{\setbox0=\hbox{$x$}\kern\wd0\hbox to 0pt{\hss$\mid$\hss}
\lower.9\ht0\hbox to 0pt{\hss$\smile$\hss}\kern\wd0}

\def\Notind{\setbox0=\hbox{$x$}\kern\wd0\hbox to 0pt{\mathchardef
\nn=12854\hss$\nn$\kern1.4\wd0\hss}\hbox to
0pt{\hss$\mid$\hss}\lower.9\ht0 \hbox to 0pt{\hss$\smile$\hss}\kern\wd0}

\DeclareMathOperator{\cl}{cl}
\DeclareMathOperator{\tint}{int}

\DeclareMathOperator{\RO}{RO}

\newcommand{\dense}{\textnormal{dense}}
\newcommand{\Kden}{\Kc_{\dense}}
\newcommand{\Rden}{\Rc_{\dense}}

\begin{document}
\title{A natural haystack of differentially closed fields}

\address{Department of Mathematics \\
  Iowa State University \\
  396 Carver Hall \\
  411 Morrill Road \\
  Ames, IA 50011, USA}
\author{James E. Hanson}
\email{jameseh@iastate.edu}
\date{\today}

\keywords{differentially closed fields, meromorphic functions, dense open sets, forcing, sheaves, Baire category theorem}
\subjclass[2020]{Primary 12H05, 03C60, Secondary 03E40, 30D30, 18F20}

\begin{abstract}
  In this partially expository paper, we present a novel construction of differentially closed fields of characteristic $0$: Let $\Kden$ be the differential ring of all meromorphic functions whose domain is a (not necessarily connected) dense open subset of $\Cb$ modulo agreement on dense open sets (i.e., $f$ and $g$ are considered equal if there is a dense open $U \subseteq \Cb$ such that $f|_U = g|_U$). We show that every ring ideal of $\Kden$ is a differential ideal and that for every maximal ideal $\mfr$, the quotient $\Kden/\mfr$ is a differentially closed field. We also show that $\Kden/\mfr$ is saturated and has cardinality of the continuum, implying that any two such quotients are isomorphic as differential fields.

  We then discuss how to motivate this construction in terms of set-theoretic forcing, Boolean-valued models, and $\neg\neg$-sheaves on $\Cb$, taking the opportunity to present an impressionistic expository account of these ideas.

  Finally, we discuss some immediate generalizations of this construction involving the real and $p$-adic numbers and ask some questions about them.

\end{abstract}

\maketitle

\begin{figure}[h]
  \centering
  \includegraphics[width=\textwidth]{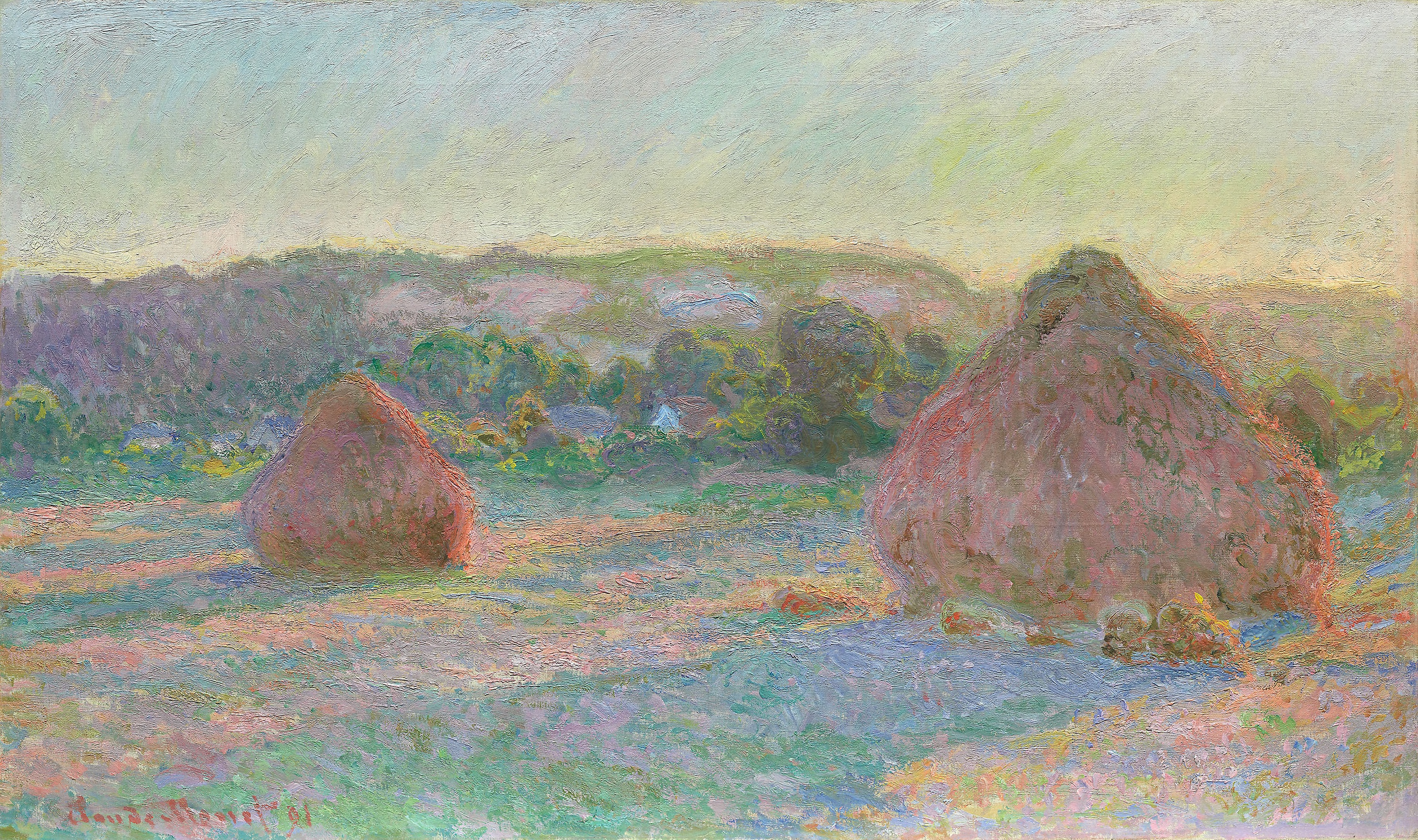}
  \caption{Monet, Claude. \emph{Stacks of Wheat (End of Summer)}. 1890-91. Oil on canvas.}
  \label{fig:Monet}
\end{figure}

\section*{Introduction}
\label{sec:intro}

Differentially closed fields have historically been hard to pin down. As Spodzieja writes in \cite{Spodzieja2020}, ``Despite a fairly long period of study of differential algebras, it is difficult to indicate papers where natural examples of differentially closed fields are given.'' As such, the conventional wisdom is often that there simply are no `natural' examples. Like algebraically closed fields of positive characteristic, the only obvious way to exhibit such a thing is building the field by hand inductively as the union of a tower of abstract extensions. Spodzieja's somewhat technical construction in \cite{Spodzieja2020} seems to be the only published method of building differentially closed fields using ideas that are not entirely algebraic or model-theoretic in nature.

As opposed to positive characteristic fields, which are seemingly purely algebraic objects, there is a large family of naturally occurring characteristic $0$ differential fields. For any connected open set $U \subseteq \Cb$ (or more generally for any Riemann surface), the ring of meromorphic functions on $U$, $\Kc(U)$, is a differential field. And similarly, for any $a \in \Cb$, the stalk $\Kc_a$ of germs of meromorphic functions at $a$ is a differential field. In \cite{Seidenberg1969}, Seidenberg showed that every countable characteristic $0$ differential field embeds into $\Kc_a$ (for any $a$), establishing a connection between abstract differential fields and complex analysis. He did this using the following lemma, which is conceptually central to this paper.

\begin{figure}[h]
  \centering
  \begin{tikzpicture}

    \draw (-2.75,-0.25) node {\begin{tikzcd}[row sep=large,column sep=large]
        L & {\Kc(W)} \\
        K & {\Kc(U)}
        \arrow["f", from=1-1, to=1-2]
        \arrow[hook, from=2-1, to=1-1]
        \arrow[hook, from=2-1, to=2-2]
        \arrow["{g\mapsto g|_W }"'{pos=0.45}, from=2-2, to=1-2]
      \end{tikzcd}};

    \begin{scope}[shift={(1,0)},scale=1.5]
      \draw plot [smooth cycle,tension=1.1] coordinates {(-0.2,0) (1,1) (2,0) (1,-0.5) (0,-1.5)};
      \draw (0,-0.75) node {$U$};
      \draw [dashed,fill=black,fill opacity=0.1] plot [smooth cycle,tension=1.1] coordinates {(0.4,0.3) (1.25,0.7) (1.2,0.3) (1.2,-0.15)};
      \draw (1,0.25) node {$W$};
    \end{scope}
  \end{tikzpicture}
  \caption{Seidenberg's embedding lemma (\cref{lem:Seidenberg-lemma}).}
  \label{fig:Seidenberg-1}
\end{figure}

\begin{lem}[Seidenberg {\cite{Seidenberg1969}}]\label[lem]{lem:Seidenberg-lemma}
  Let $U$ be a non-empty connected open subset of $\Cb$ and let $K \subseteq \Kc(U)$ be a finitely generated differential field. If $L \supseteq K$ is a finitely generated abstract differential field extension of $K$, then there exists a non-empty connected open set $W\subseteq U$ and a differential field embedding $f:L\to \Kc(W)$ such that the diagram in Figure~\ref{fig:Seidenberg-1} commutes.
\end{lem}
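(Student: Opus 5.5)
By induction on the number of generators of $L$ over $K$, it suffices to treat $L = K\langle a\rangle$, a differential extension generated by a single element. After each step we shrink to a smaller connected open set and restrict. This is legitimate because restriction $\Kc(U)\to\Kc(W)$ is an injective differential homomorphism for connected $W\subseteq U$, by the identity theorem. The same fact lets us replace $K$ by its image in $\Kc(W)$ at each stage.

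There are two cases. If $a$ is differentially transcendental over $K$, we need an element of some $\Kc(W)$ that satisfies no nontrivial differential polynomial over $K$. Since $K$ is countable, there are only countably many nonzero differential polynomials $P$ over $K$. For each such $P$, the set of entire functions (or power series centered at a chosen point $z_0$ where all coefficients of $K$'s generators are holomorphic) with $P(h)\equiv 0$ is meagre. Concretely, choose $z_0$ where the finitely many generators of $K$ are holomorphic. Then choose Taylor coefficients of $h$ at $z_0$ inductively, growing fast but controlled, so that $h$ is entire and avoids each $P$; a Baire category argument in the space of entire functions also works. Take $f(a)=h$ on a small disc $W$.

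Otherwise $a$ is differentially algebraic. Let $P(y,y',\dots,y^{(n)})$ be a minimal differential polynomial of $a$ over $K$, with $n$ the order and $P$ irreducible. Then $L=K\langle a\rangle$ is the fraction field of $K[a,\dots,a^{(n)}]$ with $a,\dots,a^{(n-1)}$ algebraically independent over $K$, and $a^{(n)}$ algebraic over $K(a,\dots,a^{(n-1)})$ via $P$. The separant $S=\partial P/\partial y^{(n)}$ is nonzero at $a$ by minimality. Pick $z_0\in U$ and complex initial values $c_0,\dots,c_{n-1},c_n$ with the following properties: the coefficients of $P$ are holomorphic at $z_0$, $P(z_0,c)=0$, $S(z_0,c)\neq 0$, and the tuple $(c_0,\dots,c_{n-1})$ is generic. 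Genericity here means it avoids the zero locus at $z_0$ of countably many nonzero polynomials. We will ensure this can be arranged.

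By the holomorphic implicit function theorem, near $(z_0,c)$ the equation $P=0$ can be solved as $y^{(n)}=G(z,y,\dots,y^{(n-1)})$ with $G$ holomorphic. Cauchy's existence theorem for holomorphic ODEs then gives a holomorphic solution $h$ on a small disc $W$ with $h^{(i)}(z_0)=c_i$. Define $f(a)=h$. The remaining point is that $f$ is well defined and injective: every nonzero polynomial $Q\in K[y,\dots,y^{(n-1)}]$ must satisfy $Q(h,\dots,h^{(n-1)})\not\equiv 0$, and similarly with $a^{(n)}$ adjoined, which reduces modulo $P$ to this case.

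The main obstacle is this genericity step. Evaluating at $z_0$ is not enough, since a coefficient of $Q$ could vanish at $z_0$, and $Q$ ranges over countably many polynomials. I would handle it by varying the initial conditions: the solution depends holomorphically on $(c_0,\dots,c_{n-1})$ in an open set $V\subseteq\Cb^n$. For fixed nonzero $Q$, the map $c\mapsto Q(h_c,\dots,h_c^{(n-1)})$ is not identically zero as a function of $(z,c)$. This holds because at $z$ near $z_0$ the map $c\mapsto(h_c(z),\dots,h_c^{(n-1)}(z))$ is a local biholomorphism, so the values fill an open set, on which the nonzero polynomial $Q(z,\cdot)$ cannot vanish. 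Hence the bad set of $c$ for each $Q$ is a proper analytic subset of $V$, which is nowhere dense. By the Baire category theorem some $c$ avoids all countably many of them. This gives the embedding, and the diagram commutes because $K$ maps by restriction.
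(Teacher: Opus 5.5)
Your proof is correct, but at the key genericity step it takes a more analytic route than the one the paper has in mind. The paper only cites Seidenberg for this lemma, but Fact~\ref{fact:indep-solution} records the intended method.

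Since $K$ is countable and every nonzero element of $K$ has only countably many zeros and poles in the connected set $U$, you can choose $z_0\in U$ that is a zero or pole of \emph{no} nonzero element of $K$. Evaluation $g\mapsto g(z_0)$ is then an injective field homomorphism onto the countable field $k_0=\{g(z_0):g\in K\}$. Consequently:
\begin{itemize}
\item Every nonzero $Q\in K[y_0,\dots,y_{n-1}]$ specializes to a nonzero polynomial over $k_0$, so it suffices to take $(c_0,\dots,c_{n-1})$ algebraically independent over $k_0$.
\item $P(z_0,c_0,\dots,c_{n-1},y_n)$ is then the image of $P$ under an isomorphism $K(y_0,\dots,y_{n-1})\cong k_0(c_0,\dots,c_{n-1})$. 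Hence it is irreducible and so separable, and it has a simple root $c_n$. This settles the existence of a point with $P=0\neq S$, which you left implicit.
\end{itemize}
So the obstacle you flag, a coefficient of $Q$ vanishing at $z_0$, is removed by choosing $z_0$ well rather than by moving the initial data.

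The same device handles the transcendental case. The series $h=\sum_k a_k(z-z_0)^k/k!$ with $a_k\in[0,1]$ algebraically independent over $k_0$ is entire, and $P(h)(z_0)\neq 0$ for every nonzero differential polynomial $P$ over $K$. No growth bookkeeping is needed; compare Fact~\ref{fact:perf-soln-trans}.

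Your version instead works at an arbitrary base point and runs Baire category in parameter space. That costs you three ingredients:
\begin{itemize}
\item holomorphic dependence of the solutions on initial data over a common disc;
\item the local-biholomorphism argument at nearby $z$, where you should take $z$ off the discrete set on which $Q(z,\cdot)$ becomes the zero polynomial or has a pole;
\item nowhere density of proper analytic subsets of a connected polydisc.
\end{itemize}
All of this is sound, and your approach avoids any special choice of $z_0$. The generic-point route, by contrast, reduces everything after choosing $z_0$ to transcendence-degree algebra over a countable field. It also directly gives the picture of good initial conditions as an open set minus a countable union of proper algebraic sets, which the paper later upgrades to perfect sets of mutually generic solutions in Fact~\ref{fact:perf-soln}.
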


Compare this to Kolchin's notion of \emph{universality} for differential fields (a mild strengthening of differential closure\footnote{Model-theoretically, universal differential fields are precisely $\aleph_0$-saturated differentially closed fields.}).

\begin{defn}[Kolchin {\cite[Ch.~III.7]{kolchin1973differential}}]\label[defn]{defn:universal-field}
  A characteristic $0$ differential field $F$ is \emph{universal} if whenever $K \subseteq F$ is a finitely generated differential subfield and $L \supseteq K$ is a finitely generated differential field extension of $K$, there exists a differential field embedding $f : L \to F$ such that the following diagram commutes:
  \[\begin{tikzcd}
      L & \\
      K & F
      \arrow["f", from=1-1, to=2-2]
      \arrow[hook, from=2-1, to=1-1]
      \arrow[hook, from=2-1, to=2-2]
    \end{tikzcd}\]
\end{defn}

What \cref{lem:Seidenberg-lemma} tells us is that the sheaf $\Kc$ of meromorphic functions behaves like a universal characteristic $0$ differential field in some `dynamic' sense, a sense in which we're only concerned with finitely many meromorphic functions at a time and we are willing to pass to smaller open domains (without much control over which smaller open domains we pass to at any given time).

Despite this, no fields of the form $\Kc(U)$ or $\Kc_a$ are universal or even differentially closed. For any $a \in U$, there is no antiderivative of $\frac{1}{z-a}$ in $\Kc(U)$ or in $\Kc_a$ (since $\log(z-a)$ has a branch cut). This obstruction suggests a childish question:

\vspace{0.5em}
\begin{center}
  If $a$ was a complex number that magically wasn't the pole of any meromorphic function, would $\Kc_a$ be a differentially closed field?
\end{center}
\vspace{0.5em}
Now, while this idea is obviously vacuous nonsense, what we will show in this paper is that something very similar can be formalized. In a still fantastical but somewhat more accurate manner of speaking, our main result can be interpreted as saying that if $a$ is a complex number that is an element of every dense open subset of $\Cb$, then $\Kc_a$ is a differentially closed field.

The presentation of the main construction---the one described in the abstract---is given in a self-contained way in Sections~\ref{sec:main} and \ref{sec:models} and Appendix~\ref{ap:saturated}. This might not immediately resemble the idea described in the previous paragraph aside from the vague commonality of dense open sets, but we will back up and justify this perspective in \cref{sec:forcing}, the expository body of the paper. \cref{sec:generalizations-and-questions} contains a variety of generalizations and questions suggested by this construction.

\section{The ring $\Kden$}
\label{sec:main}

\begin{defn}\label[defn]{defn:Kden-ring}
  Given meromorphic functions $f : U \to \Cb$ and $g : V \to \Cb$, with $U,V\subseteq \Cb$ (not necessarily connected) dense open sets, $f$ and $g$ \emph{agree on a dense open set} if there is a dense open set $W \subseteq U \cap V$ such that $f|_W = g|_W$. (This is equivalent to saying that $f$ and $g$ agree on the intersection of their domains.) It is easy to see that this is an equivalence relation that respects differential ring operations.  Given $f : U \to \Cb$, we will write $[f]$ for equivalence class of $f$ under this equivalence relation.

  Let $\Kden$ be the set of equivalence classes of this equivalence relation endowed with the obvious differential ring operations:
  \begin{itemize}
  \item $0 = [0]$.
  \item $1 = [1]$.
  \item $[f]+[g] = [(f+g)|_{\dom(f)\cap\dom(g)}]$.
  \item $[f][g] = [f g |_{\dom(f)\cap\dom(g)}]$.
  \item $D[f]=[Df]$.
  \end{itemize}
\end{defn}

It is immediate that $\Kden$ is a (commutative) differential ring and that $\Kden$ has no non-trivial nilpotent elements. It is also easy to see that the analogous ring defined in terms of holomorphic functions is isomorphic to $\Kden$.

Recall that $R$ is a \emph{von Neumann regular ring} if for every $r \in R$, there is an $s \in R$ such that $r = rsr$. Moreover, such an $s$ is called a \emph{weak inverse} of $r$.
It is fairly easy to see that $\Kden$ is a von Neumann regular ring and that for any $[f] \in \Kden$,
\[
	[f]^\dagger = \left[
    \begin{cases}
      \frac{1}{f(z)} & f(z) \neq 0 \\
      0 & f~\textnormal{vanishes in some neighborhood of}~z
    \end{cases}
	\right]
\]
is a weak inverse of $[f]$.
It follows from the general theory of von Neumann regular rings (but is also obvious in this particular case) that every ideal of $\Kden$ is generated by some (possibly infinite) family of idempotent elements and every finitely generated ideal is generated by a single idempotent element. In particular, the principal ideal $\langle [f] \rangle$ is also generated by
\[
  [f][f]^\dagger = \left[ \begin{cases}
    1 & f(z) \neq 0 \\
    0 & f~\textnormal{vanishes in some neighborhood of}~z
  \end{cases} \right],
\]
which is idempotent. Moreover, it is clear that every idempotent element of $\Kden$ is of this form (i.e., is (the equivalence class of) a $\{0,1\}$-valued function). We can say a little bit more. Given any open set $U \subseteq \Cb$, we define
\[
  \onebf_U(z) = \begin{cases}
    1 & z \in U \\
    0 & z \in \tint(\Cb \setminus U)
  \end{cases}.
\]
Note that for any open $U,V \subseteq \Cb$, $[\onebf_U] = [\onebf_V]$ if and only if $\cl U=\cl V$. Every idempotent element of $\Kden$ is of the form $[\onebf_U]$ and these correspond precisely to regular open subsets of $\Cb$. Moreover, $1-[\onebf_U] = [\onebf_{\tint(\Cb\setminus U) }]$ for any $U$.

\begin{prop}\label[prop]{prop:diff-ideal}
  Every ring ideal of $\Kden$ is a differential ideal.
\end{prop}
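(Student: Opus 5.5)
The plan is to reduce to idempotents, using the von Neumann regular structure described just before \cref{prop:diff-ideal}. Let $I \subseteq \Kden$ be a ring ideal and $[f] \in I$; we must show $D[f] \in I$. As noted above, $\langle [f] \rangle$ is generated by the idempotent $e = [f][f]^\dagger = [\onebf_U]$, where $U = \{z : f(z) \neq 0\}$ is the (open) nonvanishing locus of $f$ on its domain. Since $[f] \in I$, also $e \in I$. So it suffices to show that $D[f] \in \langle e \rangle$, i.e.\ that $D[f] = e\cdot D[f]$.

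The first step is to differentiate an idempotent. Since $\onebf_U$ is locally constant on the dense open set $U \cup \tint(\Cb\setminus U)$, we get $D e = [D\onebf_U] = 0$. More abstractly, differentiating $e^2 = e$ gives $2e\,De = De$; multiplying by $e$ yields $2e\,De = e\,De$, so $e\,De = 0$ and hence $De = 0$. This uses only that $2$ is invertible, which holds since we are in characteristic $0$.

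Next, $[f] = e[f]$, because $f$ vanishes wherever $\onebf_U$ is $0$ (on a dense open set). Applying the Leibniz rule gives $D[f] = (De)[f] + e\,D[f] = e\,D[f] \in \langle e\rangle \subseteq I$, as required.

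There is no real obstacle. The only point needing care is the equality $[f] = e[f]$ in $\Kden$. It holds because, on the dense open set where $f$ is defined, $f$ either is nonzero at a point or vanishes on a neighborhood of it, apart from the isolated zeros of $f$ on components where $f$ is not identically zero. Removing that discrete closed set leaves a dense open set on which the identity holds pointwise.
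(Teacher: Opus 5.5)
Your proof is correct. It reaches the same identity as the paper, $D[f] = e\,D[f]$ with $e = [f][f]^\dagger$, but justifies it differently.

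The paper checks $D[f] = [Df][f]^\dagger[f]$ directly and pointwise. It uses the analytic fact that $f'$ vanishes on every open set where $f$ vanishes. You derive the identity algebraically from the Leibniz rule, $De = 0$, and $[f] = e[f]$.

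Your route gives more generality:
\begin{itemize}
\item $[f] = e[f]$ is just the weak-inverse identity $[f] = [f][f]^\dagger[f]$. So the analytic check in your last paragraph is unnecessary.
\item $De = 0$ holds for any idempotent in any commutative differential ring.
\end{itemize}
Together these show that every ideal of an arbitrary commutative von Neumann regular differential ring is a differential ideal.

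One small correction: your computation does not use that $2$ is invertible. From $e\,De = 2e\,De$ you get $e\,De = 0$ by subtraction, and then $De = 2e\,De = 0$, in any characteristic.
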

\begin{proof}
  For any $[f] \in \Kden$, we have that $D[f] = [Df][f]^\dagger[f]$ (since $f'(z)$ vanishes on any open set on which $f(z)$ vanishes). It follows immediately that $D[f]$ is an element of any ideal containing $[f]$.
\end{proof}

\cref{prop:diff-ideal} implies that every quotient ring of $\Kden$ has a canonical differential ring structure, so in particular $\Kden/\mfr$ is a differential field for any maximal ideal $\mfr$.

\section{$\Kden/\mfr$ is differentially closed}
\label{sec:models}

For the remainder of this section and the next, fix a maximal ideal $\mfr$ in $\Kden$. Our goal is to show that $\Kden/\mfr$ is a differentially closed field. It is also clearly a $\Cb$-algebra and so has characteristic $0$. Therefore we just need to show that $\Kden/\mfr$ satisfies the differential closure condition: For any differential polynomials $p$ and $q$ where the order of $p$ is greater than that of $q$, there is an $x \in \Kden/\mfr$ such that $p(x) = 0$ and $q(x)\neq0$.

We will write $[[f]]$ for the element of $\Kden/\mfr$ corresponding to $[f] \in \Kden$.

\begin{lem}\label[lem]{lem:basic-stuff} $ $
  \begin{enumerate}
  \item\label{either-or} For any open sets $U,V \subseteq \Cb$, if $U\cap V = \varnothing$, then either $[\onebf_U] \in \mfr$ or $[\onebf_V] \in \mfr$.
  \item\label{zero-char} For any $[f] \in \Kden$, $[[f]] = 0$ if and only if there is an open set $U$ with $[\onebf_U] \in \mfr$ such that for all $z \in \dom(f)$, if $f(z) \neq 0$, then $z \in U$.
  \item\label{nonzero-char} For any $[f] \in \Kden$, $[[f]] \neq 0$ if and only if there is an open set $U$ with $[\onebf_U] \in \mfr$ such that for all $z \in \dom(f)$, if $f$ vanishes on some neighborhood of $z$, then $z \in U$.
  \end{enumerate}
\end{lem}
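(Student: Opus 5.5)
We prove the three parts in order; each reduces to elementary facts about idempotents in $\Kden$ together with maximality (hence primality) of $\mfr$.

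For \eqref{either-or}: if $U \cap V = \varnothing$, then $\onebf_U \cdot \onebf_V$ is zero wherever defined. So $[\onebf_U][\onebf_V] = 0 \in \mfr$, and since a maximal ideal is prime, one of the factors lies in $\mfr$.

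For \eqref{zero-char}, first the backward direction. Suppose $U$ is open with $[\onebf_U] \in \mfr$ and the nonvanishing set of $f$ is contained in $U$. We claim $[f] = [f][\onebf_U]$. Work on the dense open set $\dom(f) \cap (U \cup \tint(\Cb\setminus U))$. At a point of $U$ the two sides agree. At a point of $\tint(\Cb\setminus U)$ we have $f = 0$ by hypothesis, so again they agree. Hence $[f] \in \mfr$ and $[[f]] = 0$.

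For the forward direction of \eqref{zero-char}, suppose $[f] \in \mfr$. Then $[f][f]^\dagger \in \mfr$, and this idempotent equals $[\onebf_U]$ for $U = \{z \in \dom(f) : f(z) \neq 0\}$, which is open. This $U$ contains every point of $\dom(f)$ where $f$ is nonzero, as required.

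For \eqref{nonzero-char}, use that $\mfr$ is maximal. If $[[f]] \neq 0$, then $[[f]]$ is invertible in the field $\Kden/\mfr$, so $1 - [f][g] \in \mfr$ for some $[g]$. Multiplying by $[f]^\dagger[f]$ shows $1 - [f][f]^\dagger \in \mfr$. Now $1 - [f][f]^\dagger = [\onebf_V]$, where $V$ is the interior of the zero set of $f$ in $\dom(f)$; this is exactly the set of points near which $f$ vanishes. Taking $U = V$ gives the forward direction.

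Conversely, suppose $U$ is open with $[\onebf_U] \in \mfr$ and $U$ contains every point near which $f$ vanishes. Assume for contradiction that $[[f]] = 0$. By \eqref{zero-char} there is an open $U'$ with $[\onebf_{U'}] \in \mfr$ containing the nonvanishing set of $f$. The nonvanishing set and the interior of the zero set together form a dense open subset of $\dom(f)$, hence of $\Cb$. So $U \cup U'$ is dense, which gives $[\onebf_U] + [\onebf_{U'}] - [\onebf_U][\onebf_{U'}] = [\onebf_{U\cup U'}] = 1$. The left side lies in $\mfr$, so $1 \in \mfr$, a contradiction.

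The only step needing care is this last one, which requires checking that $[\onebf_{U\cup U'}] = 1$ whenever $U \cup U'$ is dense. That holds because $\onebf_{U\cup U'}$ equals $1$ on the dense open set $U\cup U'$. The same kind of density bookkeeping also justifies the identity $[f] = [f][\onebf_U]$ used in \eqref{zero-char}.
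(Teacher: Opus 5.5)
Your proof is correct. Parts (1) and (2) are essentially the paper's argument; where the paper just says ``clearly $[f]\in\mfr$'', you spell it out as $[f]=[f][\onebf_U]$.

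Part (3) is where you diverge, in two ways.

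For the forward direction, the paper does not use invertibility in $\Kden/\mfr$. It takes the disjoint open sets $V=\{z : f(z)\neq 0\}$ and $U$, the interior of the zero set. By (2), $[\onebf_V]\in\mfr$ would force $[[f]]=0$, so (1) gives $[\onebf_U]\in\mfr$. This reuses the earlier parts and needs only primality of $\mfr$. Your route through an inverse $[g]$ and the weak inverse is equally valid, but one step is misstated. Multiplying $1-[f][g]$ by $[f]^\dagger[f]$ gives $[f][f]^\dagger-[f][g]\in\mfr$, and you must still subtract this from $1-[f][g]$. Multiplying by $1-[f][f]^\dagger$ instead gives $1-[f][f]^\dagger$ directly, since $[f][f]^\dagger[f]=[f]$.

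More substantively, the paper's proof of (3) only writes out the forward direction. The converse is also needed: it is what yields $q([[g]])\neq 0$ in the proof of \cref{thm:models-DCF}, with the open set $\tint(\Cb\setminus W)$. Your density argument supplies this missing half. Since $U\cup U'$ is dense, $[\onebf_U]+[\onebf_{U'}]-[\onebf_U][\onebf_{U'}]=[\onebf_{U\cup U'}]=1$, which would put $1\in\mfr$. This uses only that $\mfr$ is a proper ideal.
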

\begin{proof}
  (\ref{either-or}) Since $[\onebf_U][\onebf_V] = 0 \in \mfr$, the result follows from the fact that $\mfr$ is maximal and therefore prime.

  (\ref{zero-char}) If there is such a $U$, then clearly $[f]\in\mfr$ and thereby $[[f]] = 0$. On the other hand if $[[f]] = 0$, then $[f] \in \mfr$, so $\langle [f] \rangle \subseteq \mfr$. $\langle [f] \rangle$ is also generated by $[f][f]^\dagger$, which is equal to $[\onebf_{\{z : f(z) \neq 0\}}]$.

  (\ref{nonzero-char}) Let $V = \{z : f(z) \neq 0\}$ and $U = \{z : f~\text{vanishes on some neighborhood of}~z\}$. Clearly $V\cap U = \varnothing$. By (\ref{zero-char}), $[\onebf_V] \notin \mfr$, so by (\ref{either-or}), $[\onebf_U] \in \mfr$.
\end{proof}

The next fact is closely related to \cref{lem:Seidenberg-lemma} and follows from some basic results of complex analysis and ordinary differential equations. We will write $\Kc(U)$ for the field of meromorphic functions on $U\subseteq \Cb$.

Recall that the \emph{order} of a differential polynomial $p(x)$ is the highest order of a derivative of $x$ appearing in $p(x)$ (with the convention that the order of $p(x)$ is $-1$ if it is a constant polynomial).

\begin{fact}\label[fact]{fact:indep-solution}
  Let $U$ be a non-empty connected open subset of $\Cb$ and let $p(x)$ be a differential polynomial with coefficients in $\Kc(U)$. Let $K \subseteq \Kc(U)$ be a countable differential field containing the coefficients of $p(x)$.
  \begin{enumerate}
  \item\label{poly} If $p(x)$ has order $0$ (i.e., is a non-constant ordinary polynomial), then there is a dense open $V \subseteq U$ and a meromorphic function $f$ on $V$ such that $p(f) = 0$.
  \end{enumerate}
  Now assume that the order of $p(x)$ is $n \geq 1$.
  \begin{enumerate}
    \setcounter{enumi}{1}
  \item\label{generic-point} There is a point $z_0 \in U$ that is not a pole or zero of any non-zero element of $K$.
  \item\label{solution} For any such $z_0$, there is an open neighborhood $V \ni z_0$ with $V \subseteq U$ and a non-empty open set $W \subseteq \Cb^n$ such that for any $\vec{a} \in W$, there is an $f_{\vec{a}} \in \Kc(V)$ such that
    \begin{itemize}
    \item $f_{\vec{a}}^{(i)}(z_0) = a_i$ for each $i < n$ and
    \item $p(f_{\vec{a}}) = 0$ on $V$.
    \end{itemize}
  \item\label{alg-indep} For any such $f(z)$, if the set $\{f(z_0),f'(z_0),f^{(2)}(z_0),\dots,f^{(n-1)}(z_0)\}$ is algebraically independent over the field $\{g(z_0):g \in K\}$, then $g(f) \neq 0$ for every differential polynomial $g$ of order less than $n$ with coefficients from $K$.
  \end{enumerate}
\end{fact}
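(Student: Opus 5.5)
I will prove the four parts separately, using classical complex analysis and the local existence theory for analytic ODEs.

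\textbf{Part (\ref{poly}).} Write $p(x)=\sum_{i\le d} c_i x^i$ with $c_i\in\Kc(U)$, $c_d\neq 0$, and $d\ge 1$. I may assume $p$ is squarefree over the field $\Kc(U)$; if not, replace it by a squarefree factor, whose roots are roots of $p$. Then the discriminant $\Delta\in\Kc(U)$ is a nonzero meromorphic function. The zeros and poles of $\Delta,c_0,\dots,c_d$ form a closed discrete set $S\subseteq U$. On $U\setminus S$ the roots of $p$ are $d$ distinct values that vary holomorphically, by the implicit function theorem.

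To choose a single branch, I remove branch cuts. Take a countable family of pairwise disjoint closed rays or segments that contain every point of $S$ and have empty interior, for instance horizontal rays to the left of each point of $S$, perturbed so they avoid one another. Intersect their complement with $U\setminus S$. Each component of the resulting open set is simply connected relative to the punctures, so the covering map of roots trivializes and a holomorphic branch $f$ exists. A simpler route: take $V=U\setminus\bigl(S\cup\bigcup_{s\in S}\{s-t: t\ge 0\}\bigr)$, where the rays are chosen in distinct generic directions so that $V$ is dense. Density holds because the removed set is a countable union of nowhere dense closed sets which is itself closed in $U$ (the family is locally finite) and has empty interior.

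\textbf{Part (\ref{generic-point}).} $K$ is countable, and each nonzero $g\in K$ has a countable (discrete) set of zeros and poles in $U$. The union of these sets is countable, while $U$ is uncountable, so a suitable $z_0$ exists. Baire category also works.

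\textbf{Part (\ref{solution}).} Write $p(x)=\sum_j a_j(z,x,\dots,x^{(n-1)})\,(x^{(n)})^j$. The separant $\partial p/\partial x^{(n)}$ is a nonzero differential polynomial of order at most $n$. Choose $(\vec a,b)\in\Cb^{n+1}$ with $p(z_0,\vec a,b)=0$ and separant nonzero there. Such a point exists: $z_0$ is not a pole of any coefficient, and after reducing to the squarefree case in $x^{(n)}$, the polynomial $p(z_0,\vec a,\cdot)$ is nonconstant and squarefree for $\vec a$ in a nonempty open set. Care is needed to ensure the relevant leading coefficient and discriminant, as polynomials in $\vec a$ evaluated at $z_0$, are not identically zero; this is exactly where genericity of $z_0$ is used.

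By the holomorphic implicit function theorem, near $(z_0,\vec a,b)$ we can solve $x^{(n)}=G(z,x,\dots,x^{(n-1)})$ with $G$ holomorphic. Cauchy's existence theorem for analytic ODEs then gives solutions $f_{\vec a}$, holomorphic on a common disc $V\ni z_0$, for all $\vec a$ in a small open $W$. The radius is uniform on a compact neighborhood of the initial data.

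\textbf{Part (\ref{alg-indep}).} Suppose $g(f)=0$, where $g$ is nonzero of order $m<n$ with coefficients in $K$. Clear denominators and divide by a suitable power of $z-z_0$ so that $g$ has coefficients holomorphic at $z_0$, not all vanishing there. Coefficients in $K$ have neither zeros nor poles at $z_0$, so no renormalization is actually needed: evaluating $g(f)=0$ at $z_0$ gives
\[
g^{z_0}\bigl(f(z_0),\dots,f^{(m)}(z_0)\bigr)=0,
\]
where $g^{z_0}$ is the nonzero polynomial over $\{h(z_0):h\in K\}$ obtained by evaluating the coefficients. The evaluation map from $K$ is an injective field homomorphism because nonzero elements do not vanish at $z_0$. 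This contradicts algebraic independence.

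\textbf{Main obstacle.} The main difficulty is part (\ref{solution}): ensuring that the specialization at $z_0$ keeps $p$ nondegenerate in $x^{(n)}$ on an open set of initial data. I will handle it by using the separant and genericity of $z_0$, after first replacing $p$ by an irreducible factor of the same order.
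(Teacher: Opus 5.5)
Parts (2)--(4) of your proposal are sound, but part (1) has a genuine gap: deleting $S$ together with leftward rays from its points does not produce a dense open set carrying a single-valued root.

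\textbf{Non-simply-connected $U$.} $U$ is only assumed connected. Cuts emanating from points of $S$ do nothing about monodromy around the holes of $U$. Take $U=\{1<|z|<2\}$ and $p(x)=x^2-z$. The leading coefficient is $1$, there are no poles, and the discriminant $4z$ has no zeros in $U$. So $S=\varnothing$, your $V$ is all of $U$, and $U$ carries no branch of $\sqrt z$. Your assertion that the components are ``simply connected relative to the punctures'' is exactly what fails here.

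\textbf{Rays not locally finite.} $S$ is only closed and discrete in $U$, so it may accumulate along $\partial U$. Every closed discrete subset of $U$ is the zero set of some holomorphic $h$, and then $p(x)=x^2-h$ has discriminant $4h$. Take $U$ to be the unit disc and $S=\{s_k\}$ with $|s_k|\to 1$, $\mathrm{Re}\, s_k>0$, and $\{\mathrm{Im}\, s_k\}$ dense in $(-1,1)$. Then the deleted horizontal chords are dense in $U$. Your $V$ is therefore not open, and its interior is empty.

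\textbf{Repair.} Use the trick the paper uses in the proof of \cref{lem:Seidenberg-dense}. $V$ only needs to be dense and open, not connected. Run through a countable basis of $U$ and greedily choose pairwise disjoint open discs $D_k\subseteq U\setminus S$ whose union is dense in $U$; this is possible because $S$ is closed and discrete. Over each $D_k$ the roots of $p$ form an unbranched $d$-sheeted covering, which is trivial because $D_k$ is simply connected. So each $D_k$ carries a holomorphic root $f_k$, and gluing the $f_k$ gives $f$ on $V=\bigcup_k D_k$.

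\textbf{The other parts.} The paper gives no proof of this Fact beyond saying it follows from basic complex analysis and ODE theory, and your (2)--(4) supply exactly that.
\begin{itemize}
\item In (3), pass to an irreducible factor of order $n$. Its leading coefficient and discriminant in $x^{(n)}$ are then nonzero elements of $K[x,\dots,x^{(n-1)}]$.
\item They remain nonzero after evaluating coefficients at $z_0$, because evaluation is an injective homomorphism on $K$. This is the same observation you use in (4).
\item The holomorphic implicit function theorem, together with Cauchy's existence theorem with a radius uniform over a compact set of initial data, then yields the common $V$ and $W$.
\end{itemize}
So the real weak point of your write-up is (1), not (3), which you had flagged as the main obstacle.
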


\begin{defn}
  Say that a \emph{generic differential polynomial}\footnote{This is mildly non-standard in that we're allowing zero monomials.} is a multivariable differential polynomial \[p(x;y_0,\dots,y_n)= \sum_{i \leq n}y_iq_i(x)\] such that
  \begin{itemize}
  \item each $q_i(x)$ is either a differential monomial with no coefficient or $0$ and
  \item no two non-zero $q_i$'s are equal.
  \end{itemize}
\end{defn}
\begin{sloppypar}
Note that for any differential polynomial $q(x) \in R\{x\}$, there is a generic differential polynomial $p(x;y_0,\dots,y_n)$ and $a_0,\dots,a_n \in R$ such that $q(x) = p(x;a_0,\dots,a_n)$. Moreover, for any two differential polynomials $q_0(x),q_1(x) \in R\{x\}$, there are generic differential polynomials $p_0(x;y_0,\dots,y_n)$ and $p_1(x;y_0,\dots,y_n)$ and $a_0,\dots,a_n \in R$ such that $q_i(x) = p_i(x;a_0,\dots,a_n)$ for both $i < 2$. (This is the reason why we allow the differential monomials to be $0$.) The following lemma is easy to verify.
\end{sloppypar}

\begin{lem}\label[lem]{lem:diff-poly-temp}
  For any generic differential polynomial $p(x;y_0,\dots,y_m)$ and meromorphic functions $f_0,\dots,f_m$ on $U$, if each $f_i$ is nowhere identically vanishing\footnote{We say that a function $f$ is \emph{nowhere identically vanishing} on $U$ if there does not exist a non-empty open $V \subseteq U$ on which $f$ vanishes identically.} on $U$, then for every non-empty $V \subseteq U$, the order of $p(x;f_0|_V,\dots,f_m|_V)$ is equal to the order of $p(x;f_0,\dots,f_m)$. \qed
\end{lem}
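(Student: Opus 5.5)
The plan is to reduce the claim to a statement about which differential monomials survive after the coefficients are substituted, and then to show that this survival cannot change on passing to a non-empty open subset. Write $p(x;y_0,\dots,y_m)=\sum_{i\le m} y_i q_i(x)$, where each $q_i$ is either $0$ or a coefficient-free differential monomial, and the non-zero $q_i$ are pairwise distinct. Substituting $f_i$ gives
\[
p(x;f_0,\dots,f_m)=\sum_{i\le m} f_i q_i(x).
\]
Because the non-zero $q_i$ are distinct monomials, no two terms combine. The monomial $q_i$ therefore appears in this polynomial exactly when $q_i\neq 0$ and $f_i$ is not the zero element of $\Kc(U)$, read componentwise if $U$ is disconnected. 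The order of $p(x;f_0,\dots,f_m)$ is then the maximum of the orders of those $q_i$ that appear, or $-1$ if none appear. Exactly the same description holds for $p(x;f_0|_V,\dots,f_m|_V)$, with $f_i|_V$ in place of $f_i$.

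It therefore suffices to show that, for each $i$, the function $f_i$ is non-zero if and only if $f_i|_V$ is non-zero. Here I read ``non-empty $V\subseteq U$'' as a non-empty open subset, since the restriction has to be a meromorphic function on an open set.

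\begin{itemize}
\item If $f_i|_V$ is non-zero, then clearly $f_i$ is non-zero.
\item Conversely, suppose $f_i|_V=0$ identically. Then $f_i$ vanishes identically on the non-empty open set $V\subseteq U$. This contradicts the hypothesis that $f_i$ is nowhere identically vanishing on $U$. So $f_i|_V\neq 0$.
\end{itemize}

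In fact, under the hypothesis every $f_i$ and every $f_i|_V$ is non-zero, so both orders equal the maximum order among the non-zero $q_i$.

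The only step requiring any care is the claim that distinct monomials give no cancellation, so that the order is read off from the surviving terms. This is immediate from the definition of a generic differential polynomial, which is precisely why the paper insists that no two non-zero $q_i$ are equal. Everything else is bookkeeping, and I expect no real obstacle.
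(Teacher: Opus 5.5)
Your proof is correct, and it is the routine verification the paper has in mind: the paper gives no proof, saying only that the lemma is easy to verify. Since the non-zero $q_i$ are distinct monomials, the coefficient of $q_i$ in $p(x;f_0,\dots,f_m)$ is exactly $f_i$. Nowhere-vanishing makes every $f_i|_V$ non-zero, so both orders equal the maximum order of the non-zero $q_i$.
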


\begin{lem}\label[lem]{lem:Seidenberg-dense}
  Let $U$ be a non-empty open subset of $\Cb$, let $h_0,\dots,h_m$ be nowhere identically vanishing meromorphic functions on $U$, and let $p(x;y_0,\dots,y_m)$ and $q(x;y_0,\dots,y_m)$ be non-zero generic differential polynomials such that the order of $p(x;h_0,\dots,h_m)$ is greater than the order of $q(x;h_0,\dots,h_m)$. Then there exists an open $W \subseteq U$ dense in $U$ and a meromorphic function $f$ on $W$ such that
  \begin{itemize}
  \item $p(f;h_0|_W,\dots,h_m|_W) = 0$ and
  \item $q(f;h_0|_W,\dots,h_m|_W)$ is nowhere identically vanishing on $W$.
  \end{itemize}
\end{lem}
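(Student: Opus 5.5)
We argue by a Zorn's lemma / maximality argument on families of disjoint open sets, with \cref{fact:indep-solution} supplying a solution near each point of a dense set. Consider the set of pairs $(V,f_V)$ where $V \subseteq U$ is a non-empty connected open set and $f_V \in \Kc(V)$ satisfies $p(f_V;h_0|_V,\dots,h_m|_V) = 0$ and $q(f_V;h_0|_V,\dots,h_m|_V)$ is nowhere identically vanishing on $V$. Take a maximal family $\{(V_i,f_i)\}_{i \in I}$ of such pairs with the $V_i$ pairwise disjoint; this exists by Zorn's lemma, or more concretely since $\Cb$ is second countable any disjoint family is countable and we could build it by enumerating a countable basis. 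Set $W = \bigcup_i V_i$ and $f = \bigcup_i f_i$. Since the $V_i$ are disjoint and open, $f$ is meromorphic on $W$, $p(f;\vec h|_W)=0$, and $q(f;\vec h|_W)$ is nowhere identically vanishing on $W$, because any non-empty open subset of $W$ meets some $V_i$ in a non-empty open set. It then suffices to show that $W$ is dense in $U$. If it is not, there is a non-empty connected open $U' \subseteq U \setminus \cl W$, and it is enough to produce one good pair $(V,f_V)$ with $V \subseteq U'$, contradicting maximality.

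To produce such a pair, we apply \cref{lem:diff-poly-temp}: the orders of $P := p(x;\vec h|_{U'})$ and $Q := q(x;\vec h|_{U'})$ agree with those over $U$, so $\mathrm{ord}\,P > \mathrm{ord}\,Q$, and in particular $P$ is non-constant. Let $K$ be the differential field generated by the $h_i|_{U'}$ inside $\Kc(U')$. This is a field because $U'$ is connected, and it is countable. We then split into two cases.

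\emph{Order $0$ case.} Here $Q$ has order $-1$, so $Q$ is a nonzero element of $K$, hence is nowhere identically vanishing on $U'$. By \cref{fact:indep-solution}(\ref{poly}) there is a dense open $V \subseteq U'$ and $f_V$ with $P(f_V)=0$. Shrinking $V$ to a connected component gives a good pair.

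\emph{Order $n \geq 1$ case.} We pick $z_0$ as in \cref{fact:indep-solution}(\ref{generic-point}) and obtain $V \ni z_0$ and the open set $W' \subseteq \Cb^n$ from (\ref{solution}). A non-empty open subset of $\Cb^n$ contains a tuple $\vec a$ algebraically independent over the countable field $\{g(z_0) : g \in K\}$, since the algebraic sets over a countable field form a countable union of nowhere dense sets (Baire). Taking $f_{\vec a}$ on $V$, shrunk to a connected neighborhood of $z_0$, part (\ref{alg-indep}) gives $Q(f_{\vec a}) \neq 0$ because $Q$ has order $<n$ with coefficients in $K$. As $V$ is connected, a nonzero meromorphic function on it is nowhere identically vanishing. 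Thus $(V,f_{\vec a})$ is a good pair inside $U'$.

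The main obstacle is bookkeeping rather than analysis. First, we must make sure \cref{fact:indep-solution} is applied on a connected set, so that $K$ is a field and ``nonzero'' means ``nowhere identically vanishing''. Second, we must ensure the generic coefficients stay nowhere vanishing on $U'$, which holds by hypothesis on the $h_i$, so that \cref{lem:diff-poly-temp} preserves the order inequality. The gluing step is routine once the disjointness is arranged.
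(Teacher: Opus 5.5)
Your proof is correct and is essentially the paper's argument: you glue local solutions supplied by \cref{fact:indep-solution} over a pairwise disjoint family of connected open sets with dense union, using \cref{lem:diff-poly-temp} to keep the order inequality on each piece. The differences are only organizational. You obtain the disjoint family by a maximality (Zorn) argument instead of the paper's explicit induction along a countable basis, and you fold the order-$0$ case into the same exhaustion instead of applying part (1) to each connected component of $U$. You also spell out two steps the paper leaves tacit: the Baire-category choice of an algebraically independent $\vec a$, and the restriction to a connected neighborhood of $z_0$ before invoking part (4).
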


\begin{proof}
  First assume that the order of $p$ is $0$ (so in particular $p$ is an ordinary polynomial and $q$ is a non-zero constant). Let $(U_i)_{i< k}$ (with $k \leq \omega$) be an enumeration of the connected components of $U$. For each $i < k$, apply \cref{fact:indep-solution} part (\ref{poly}) to $p(x;h_0|_{U_i},\dots,h_m|_{U_i})$ to get an open set $W_i\subseteq U_i$ dense in $U_i$ and a meromorphic function $f_i$ on $W_i$ such that $p(f_i;h_0|_{U_i},\dots,h_m|_{U_i}) = 0$. Let $f$ be the unique meromorphic function on $W = \bigcup_{i < k} W_i$ satisfying that $f|_{W_i} = f_i$ for each $i < k$. Since $q$ has order $-1$, it must be a constant, so $f$ satisfies the required conditions by construction.

  Now assume that the order of $p$ is at least $1$. Let $(V_i)_{i < \omega}$ be an enumeration of a basis of open subsets of $U$ such that for every $n$, $\bigcup_{i > n} V_i$ is dense in $U$.
We will define the following data by induction: a sequence $(W_n)_{n < \omega}$ of pairwise disjoint connected non-empty open subsets of $U$ and  $(f_n)_{n < \omega}$ such that $f_n$ is meromorphic on $W_n$ and satisfies that $p(f_n;h_0|_{W_n},\dots,h_m|_{W_n}) = 0$ and $q(f_n;h_0|_{W_n},\dots,h_m|_{W_n}) \neq 0$.

  \begin{sloppypar}
  At stage $n$, if $V_n \setminus \bigcup_{j < n} W_j$ is empty, let $W_n$ be the empty set and $f_n$ be the empty function. Otherwise, let $O_n$ be a connected component of $V_n \setminus \cl \bigcup_{j < n} W_j$. Note that since $h_0,\dots,h_m$ are nowhere identically vanishing on $U$, we have by \cref{lem:diff-poly-temp} that the order of $p(x;h_0|_{O_n},\dots,h_m|_{O_n})$ is still greater than the order of $q(x;h_0|_{O_n},\dots,h_m|_{O_n})$. Therefore, by \cref{fact:indep-solution}, we can find a connected open subset $W_n \subseteq O_n$ and a meromorphic function $f_n$ on $W_n$ such that $p(f_n;h_0|_{W_n},\dots,h_m|_{W_n}) = 0$ and $q(f_n;h_0|_{W_n},\dots,h_m|_{W_n}) \neq 0$. Note that since $W_n$ is connected, it must be the case that $q(f_n;h_0|_{W_n},\dots,h_m |_{W_n})$ does not vanish identically on any non-empty open subset of $W_n$. (See Figure~\ref{fig:density}.)
  \end{sloppypar}

  Let $W = \bigcup_{n < \omega}W_n$. Let $f = \bigcup_{n < \omega}f_n$ be the unique meromorphic function on $W$ satisfying that $f |_{ W_n} = f_n$ for each $n < \omega$. $f$ now satisfies the required conditions by construction.
\end{proof}

\begin{figure}
  \centering
  \input{figures/animation.tex}
  \caption{The proof of \cref{lem:Seidenberg-dense}. At each stage, we look at the next element $V_n$ of our basis, apply \cref{fact:indep-solution} to get an open set $V_n$ and a local solution $f_n$ to the differential equation, and restrict to a maximal $W_n \subseteq V_n$ that does not intersect with previously chosen $W_m$'s.}
  \label{fig:density}
\end{figure}

\begin{thm}\label[thm]{thm:models-DCF}
  For any differential polynomials $p$ and $q$ with coefficients in $\Kden/\mfr$, if $p$ has higher order than $q$, then there is a $[[g]] \in \Kden/\mfr$ such that $p([[g]]) = 0$ and $q([[g]]) \neq 0$.

  In particular, $\Kden/\mfr$ is a differentially closed field.
\end{thm}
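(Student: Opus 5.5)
We reduce the statement to \cref{lem:Seidenberg-dense} by lifting the coefficients to $\Kden$ and localizing to a piece of $\Cb$ that the maximal ideal $\mfr$ ``sees''.

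\textbf{Lifting the data.} Write the two differential polynomials as $p(x) = p_0(x;[[a_0]],\dots,[[a_m]])$ and $q(x) = q_0(x;[[a_0]],\dots,[[a_m]])$. Here $p_0,q_0$ are generic differential polynomials and the $a_i$ are meromorphic on dense open sets; this is possible by the remark preceding \cref{lem:diff-poly-temp}. We may discard every term whose coefficient $[[a_i]]$ is $0$, so assume each $[[a_i]] \neq 0$. Likewise, we may replace each non-zero $q_i$ that is multiplied by a zero coefficient with $0$. After this, the order of $p$ (respectively $q$) is the largest order of a monomial occurring in $p_0$ (respectively $q_0$) with non-zero coefficient.

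\textbf{Localizing.} By \cref{lem:basic-stuff}(\ref{nonzero-char}), for each $i$ there is an open $U_i$ with $[\onebf_{U_i}] \in \mfr$ such that $a_i$ vanishes near $z$ only when $z \in U_i$. Put $U' = \bigcup_i U_i$. Since $[\onebf_{U'}]$ lies in the ideal generated by the $[\onebf_{U_i}]$, we have $[\onebf_{U'}] \in \mfr$. Let $U = \tint(\Cb \setminus \cl U')$ intersected with the common domain of the $a_i$. Then $[\onebf_U] = 1 - [\onebf_{U'}] \notin \mfr$, so $U \neq \varnothing$, and each $h_i = a_i|_U$ is nowhere identically vanishing on $U$.

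Because the coefficients are nowhere identically vanishing, the order of $p_0(x;h_0,\dots,h_m)$ is computed monomial by monomial. It therefore equals the order of $p$, and similarly for $q$, so the order hypothesis transfers. The only subtlety here is that no cancellation can occur between distinct monomials, and this holds because the $q_i$ are distinct.

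\textbf{Applying \cref{lem:Seidenberg-dense}.} The lemma gives an open $W \subseteq U$, dense in $U$, and $f$ meromorphic on $W$ with $p_0(f;\vec h|_W)=0$ and $q_0(f;\vec h|_W)$ nowhere identically vanishing on $W$. Extend $f$ to a dense open domain by letting $g = f$ on $W$ and $g = 0$ on $\tint(\Cb\setminus U)$. The domain $W \cup \tint(\Cb \setminus U)$ is dense in $\Cb$.

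\textbf{Checking $p([[g]])=0$.} The function $p_0(g;\vec a)$ vanishes on $W$, so its nonvanishing set lies in $\tint(\Cb\setminus U)$ up to a nowhere dense set. Since $[\onebf_{\tint(\Cb\setminus U)}] = [\onebf_{U'}] \in \mfr$, \cref{lem:basic-stuff}(\ref{zero-char}) gives $p([[g]])=0$.

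\textbf{Checking $q([[g]])\neq 0$.} The set where $q_0(g;\vec a)$ vanishes on a neighborhood avoids $W$, hence lies in $\tint(\Cb\setminus U)$ up to a nowhere dense set. By \cref{lem:basic-stuff}(\ref{nonzero-char}), $q([[g]]) \neq 0$.

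\textbf{Conclusion.} $\Kden/\mfr$ has characteristic $0$, so this closure condition is exactly the defining property of a differentially closed field.

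\textbf{Main obstacle.} The hardest step is the bookkeeping that makes the order hypothesis survive the passage from $\Kden/\mfr$ to $U$. This requires removing zero coefficients and ensuring nowhere-vanishing on a set $U$ that is ``positive'' for $\mfr$.
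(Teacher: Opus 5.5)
Your proposal is correct and follows essentially the same route as the paper. You lift to generic polynomials with nonzero coefficients, use \cref{lem:basic-stuff}(\ref{nonzero-char}) to find an open $U$ with $1-[\onebf_U]\in\mfr$ on which every coefficient is nowhere identically vanishing, apply \cref{lem:Seidenberg-dense}, extend by $0$ off $U$, and conclude with \cref{lem:basic-stuff}; your explicit check that the order hypothesis survives restriction to $U$ is a detail the paper leaves implicit.
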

\begin{proof}
  Fix generic differential polynomials $p_0$ and $q_0$ such that $p(x) = p_0(x;[[h_0]],\dots,[[h_n]])$ and $q(x) = q_0(x;[[h_0]],\dots,[[h_n]])$. We may assume without loss of generality that $[[h_i]] \neq 0$ for all $i \leq n$.

  For each $i \leq n$, apply \cref{lem:basic-stuff} part (\ref{nonzero-char}) to $h_i$ to produce an open set $V_i$. Let $V = \bigcup_{i \leq n} V_i$. It is immediate that $[\onebf_V] \in \mfr$ and that for any $z$, if some $h_i$ vanishes identically in some neighborhood of $z$, then $z \in V$.

  Let $U = \bigcap_{i \leq n}\dom(h_i) \cap \tint(\Cb \setminus V) $, and note that $1-[\onebf_U] = [\onebf_V] \in \mfr$. Note also that by construction, each $h_i$ is nowhere identically vanishing on $U$, so we can apply \cref{lem:Seidenberg-dense} to obtain an open set $W \subseteq U$ dense in $U$ and a meromorphic function $f$ on $W$ such that $p_0(f;h_0|_W,\dots,h_n|_W) = 0$ and such that $q_0(f;h_0|_W,\dots,h_n |_W)$ is nowhere identically vanishing on $W$. Define
  \[
    g(z) = \begin{cases}
      f(z) & z \in \dom(f) \\
      0 & z \in \tint(\Cb \setminus \dom(f))
    \end{cases}.
  \]
  Note that, by construction, $p_0(g;h_0,\dots,h_n)$ vanishes on $W$ and $q_0(g;h_0,\dots,h_n)$ vanishes identically nowhere on $W$. Since $1-[\onebf_W] = 1-[\onebf_U] \in \mfr$, this implies that $p([[g]]) = p_0([[g]];[[h_0]],\dots,[[h_n]]) = 0$ and $q([[g]]) = q_0([[g]];[[h_0]],\dots,[[h_n]]) \neq 0$ by \cref{lem:basic-stuff} parts (\ref{zero-char}) and (\ref{nonzero-char}).

  Since we can do this for any such differential polynomials $p$ and $q$, we have that $\Kden/\mfr$ is a differentially closed field.
\end{proof}

In Appendix~\ref{ap:saturated}, we will sharpen \cref{thm:models-DCF} in the following way.

\begin{thm}\label[thm]{thm:main}
  For any maximal ideal $\mfr$ of $\Kden$, $\Kden/\mfr$ is a saturated model of $\DCF$ with $|\Kden/\mfr| = 2^{\aleph_0}$. In particular, the isomorphism type of $\Kden/\mfr$ does not depend on the choice of $\mfr$.
\end{thm}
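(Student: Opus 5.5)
The plan has three parts: bound the cardinality, prove saturation, and conclude uniqueness. \cref{thm:models-DCF} already makes $\Kden/\mfr$ a model of $\DCF$.

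\textbf{Cardinality.} For the upper bound, every element of $\Kden$ is represented by a holomorphic function on a dense open set. Such an open set is a countable union of basic rational discs, and such a function is determined by its power series coefficients on those discs. Hence $|\Kden| \le 2^{\aleph_0}$, and so $|\Kden/\mfr| \le 2^{\aleph_0}$. For the lower bound, $\Cb$ embeds into $\Kden/\mfr$ as constants, since $\mfr$ is a proper ideal and nonzero constants are units.

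\textbf{Saturation: reduction.} Since $|\Kden/\mfr| = 2^{\aleph_0}$, saturation means $\kappa$-saturation for every $\kappa < 2^{\aleph_0}$. $\DCF$ is $\omega$-stable and eliminates quantifiers, and every type over a set $A$ is the unique nonforking extension of a type over a finite subset of $A$ when it is not realized in $\acl(A)$. I therefore expect saturation to reduce to two statements.
\begin{enumerate}
\item $\Kden/\mfr$ is $\aleph_1$-saturated.
\item For every differential subfield $K \subseteq \Kden/\mfr$ with $|K| < 2^{\aleph_0}$ and every finite-rank type over $K$, the set of realizations has size $2^{\aleph_0}$.
\end{enumerate}
The standard route for (2) is as follows. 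For an $\omega$-stable theory, a model is $\kappa$-saturated iff it is $\aleph_0$-saturated and every infinite indiscernible set over a set of size $<\kappa$ can be extended to size $\kappa$. Equivalently, the constants and each strongly minimal set of Lascar rank one are large. The constant field is $\Cb$ (well, the constants of $\Kden/\mfr$, which contain $\Cb$), so in practice I would verify the size condition for each order-$n$ ODE.

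\textbf{Saturation: realizing types.} To realize a type $\pi(x)$ over countably many parameters $[[h_i]]$, I would run the argument of \cref{lem:Seidenberg-dense} with all countably many formulas at once. Restrict to an open $U$ with $1-[\onebf_U]\in\mfr$ on which every relevant $h_i$ is nowhere identically vanishing; this is where I would need a countable intersection argument. Then choose $z_0$ in each basic piece that is generic for the countable field generated by the $h_i$, using \cref{fact:indep-solution} parts (\ref{generic-point})--(\ref{alg-indep}). Solving with initial conditions in the open set $W$ that are algebraically independent over that field realizes the generic type of the relevant order locally. Gluing over a disjoint dense family and extending by $0$ then gives an element of $\Kden$.

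To get $2^{\aleph_0}$ realizations over a set of size $<2^{\aleph_0}$, use the open parameter set $W\subseteq\Cb^n$ of initial conditions. There are continuum many initial values to choose from. Since the parameters generate fewer than $2^{\aleph_0}$ values at $z_0$, one can choose many mutually algebraically independent tuples. I would then run a Baire-category or independent-family argument on the gluing so that the resulting elements stay pairwise distinct modulo $\mfr$.

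\textbf{Main obstacle.} The countable-parameter step is the hardest part. The obstacle is that $\mfr$ need not be closed under countable joins of idempotents. Given countably many $[\onebf_{V_i}]\in\mfr$, the union $V=\bigcup_i V_i$ may have $[\onebf_V]\notin\mfr$. So I cannot simply intersect countably many "good" dense sets.

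The fix I would try first is to realize each formula of the type only on a set whose complement's indicator lies in $\mfr$, using a single construction that works simultaneously for all formulas. Choose $z_0$ generic over the countable field generated by the parameters. Then the solution is a generic solution of the minimal equation, so it satisfies every formula of $\pi$ on its whole component. The bad sets then arise only from the finitely many parameters relevant to each formula, which \cref{lem:basic-stuff} handles one formula at a time.

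Uniqueness up to isomorphism follows from the uniqueness of saturated models of the same cardinality of a complete theory.
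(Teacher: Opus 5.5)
Your cardinality bound and the final uniqueness step are fine and agree with the paper. The saturation part, however, has a genuine gap: none of its steps actually produces what saturation needs.

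\begin{enumerate}
\item \textbf{Condition (2) is not a reduction.} It quantifies over parameter fields of size $<2^{\aleph_0}$, so it is essentially the saturation statement itself.

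\item \textbf{The method for (2), and for the countable case, evaluates the parameters at a generic point $z_0$, and this is not available.}
\begin{itemize}
\item No choice of representatives in $\Kden$ of even countably many parameters need share an open domain. Take $h_n(z)=e^{1/(z-q_n)}$ with $\{q_n\}$ dense: no representative of $[h_n]$ is defined at $q_n$.
\item Even when some $z_0$ lies in all domains, genericity of the initial conditions at $z_0$ only controls the solution on the component through $z_0$ of the finitely many domains relevant to a given formula.
\item On the other components the parameters are unrelated to their germs at $z_0$. So your claim that the solution satisfies every formula of $\pi$ ``on its whole component'' fails. Those other components may be exactly the ones that matter modulo $\mfr$.
\end{itemize}

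\item \textbf{Cardinality is not dimension.} You mention the right kind of criterion (long indiscernible sets), but then replace it by a cardinality condition on solution sets. Making the new elements pairwise distinct modulo $\mfr$ only gives cardinality. Saturation needs realizations that are mutually generic over the parameters.

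\item \textbf{The generic type is missing.} Your criterion restricts to finite-rank types. In \DCF{} the rank-$\omega$ generic type is orthogonal to every finite-rank type, so largeness of the constants and of strongly minimal sets says nothing about it. $\aleph_1$-saturation covers it only over countable sets. Unless CH holds, you also need continuum many differentially independent elements.
\end{enumerate}

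The missing idea is the paper's reduction to finitely generated parameters, combined with a perfect set of mutually generic solutions.
\begin{itemize}
\item By $\omega$-stability it suffices, for each finitely generated $K\subseteq\Kden/\mfr$ and each non-algebraic $t\in S_1(\acl(K))$, to realize a Morley sequence in $t$ of length $2^{\aleph_0}$.
\item Finite generation is what lets you restrict finitely many representatives to a single $U$ with $1-[\onebf_U]\in\mfr$ on which they are nowhere identically vanishing.
\item One then runs the construction of \cref{lem:Seidenberg-dense}, using at each stage a perfect set of mutually generic initial conditions. This is \cref{fact:perf-soln}, or \cref{fact:perf-soln-trans} for the differentially transcendental type.
\item As a result, all $f_\alpha$ ($\alpha\in 2^\omega$) live on the same dense open $W$ and are mutually generic on every component of $W$ (\cref{lem:perf-soln-sat}). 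Having a common $W$ is essential: you cannot intersect continuum many dense open sets, nor put continuum many ``bad'' idempotents into $\mfr$.
\item \cref{lem:basic-stuff} then shows that each $[[f_\alpha]]$ realizes the nonforking extension of $t$ over $K$ together with finitely many other $[[f_\beta]]$. This gives the required Morley sequence.
\end{itemize}
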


\section{Hay in a haystack: Forcing, sheaves, and the Baire category theorem}
\label{sec:forcing}

Now we'll discuss how to connect the construction in Sections~\ref{sec:main} and \ref{sec:models} and the intuitive idea described in the introduction. We'll start by briefly sketching a proof of Seidenberg's embedding theorem from \cref{lem:Seidenberg-lemma}. (See Figure~\ref{fig:Seidenberg-embedding}.)
\begin{proof}
Think of a countable differential field $L$ as a tower of finitely generated differential field extensions $L_0 \subseteq L_1 \subseteq L_2 \subseteq \dots \subseteq L$. Then build a coherent family of embeddings of each $L_i$ into $\Kc(U_i)$ for a descending sequence of open sets $U_0 \supseteq U_1 \supseteq U_2 \supseteq \dots$ using \cref{lem:Seidenberg-lemma}. Choose the $U_i$'s in such a way as to ensure that $\bigcap_{i < \omega} U_i$ is a singleton $\{a\}$. This construction then yields an embedding of $L$ into the ring of germs of meromorphic functions at this $a$.
\end{proof}
Acolytes of the Baire category theorem will undoubtedly recognize this as bearing a marked similarity to the proof of their favorite theorem. And, indeed, this is no accident.

  \begin{figure}
  \centering
  \begin{tikzpicture}

    \draw (-2.5,-0.25) node {
      \begin{tikzcd}
        L & {\mathcal{K}_a} \\
	{\phantom{a}} & {} \\
        {L_2} & {\mathcal{K}(U_2)} \\
        {L_1} & {\mathcal{K}(U_1)} \\
        {L_0} & {\mathcal{K}(U_0)}
        \arrow[from=1-1, to=1-2]
        \arrow[hook, from=2-1, to=1-1]
        \arrow[from=2-2, to=1-2]
        \arrow["\cdots"{description,sloped}, draw=none, from=3-1, to=1-1]
        \arrow[hook, between={0}{0.9}, from=3-1, to=2-1]
        \arrow[from=3-1, to=3-2]
        \arrow["\cdots"{description,sloped}, draw=none, from=3-2, to=1-2]
        \arrow[between={0}{0.9}, from=3-2, to=2-2]
        \arrow[hook, from=4-1, to=3-1]
        \arrow[from=4-1, to=4-2]
        \arrow[from=4-2, to=3-2]
        \arrow[hook, from=5-1, to=4-1]
        \arrow[from=5-1, to=5-2]
        \arrow[from=5-2, to=4-2]
      \end{tikzcd}
    };

    \begin{scope}[shift={(1,0.5)},scale=2.5]
      \draw plot [smooth cycle,tension=1.1] coordinates {(-0.2,0) (1,1.3) (2,0) (1,-0.5) (0,-1.5)};
      \draw (0.1,-1.2) node {$U_0$};
      \draw  plot [smooth cycle,tension=1.1] coordinates {(0.3,0.5) (1,1) (1.7,0.3) (0.9,-0.3) (0.2,-1) (0,-0.5)};
      \draw (0.25,-0.73) node {$U_1$};
      \draw  plot [smooth cycle,tension=1.1] coordinates {(0.5,0.5) (1.1,0.9) (1.3,0.3) (0.6,-0.3) (0.2,-0.4)};

      \coordinate (B) at (0.35,-0.3);
      \draw (B) node {$U_2$};

      \coordinate (A) at (1,0.65);

      \foreach \i in {3,4,5}{
        \fill ($ (B)!\i/16!(A) $) circle (0.25pt);
      }

      \foreach \i [evaluate=\i as \t using \i/18] in {1,2,...,9,14}{
        \draw [opacity={\t^(1/2)}] plot [smooth cycle,tension=1.1] coordinates {($ (A)!\t^(1.1)!(0.5,0.5) $) ($ (A)!\t!(1.1,0.9) $) ($ (A)!\t^(0.9)!(1.3,0.3) $) ($ (A)!\t!(0.6,-0.3) $) ($ (A)!\t^(0.7)!(0.2,-0.2) $)};
      }

      \fill (A) circle (0.5pt) node[below] {$a$};

    \end{scope}
  \end{tikzpicture}
  \caption{The proof of Seidenberg's embedding theorem.}
  \label{fig:Seidenberg-embedding}
\end{figure}

\subsection{Reproving Seidenberg's theorem with Baire's}
\label{sec:reproving-with-Baire}

For the sake of illustrating the relationship between Seidenberg's embedding theorem and the construction of $\Kden/\mfr$ as methods of building differentially closed fields, let's refactor the proof of Seidenberg's theorem using the following notions.

\begin{defn}\label[defn]{defn:Seidenberg-sheaf}$ $
  \begin{enumerate}
  \item\label{Seidenberg-sheaf} A \emph{Seidenberg sheaf} is a subsheaf $\Fc \subseteq \Kc$ of differential rings satisfying that for any non-empty connected open subset $U$ of $\Cb$, $\Fc(U)$ is a field and for any finitely generated differential ring $K \subseteq \Fc(U)$ and finitely generated abstract differential field extension $L \supseteq K$, there is a non-empty connected open subset $W \subseteq U$ and a differential field embedding $f : L \to \Fc(W)$ such that the following diagram commutes:
    \[\begin{tikzcd}
        L & {\Fc(W)} \\
        K & {\Fc(U)}
        \arrow["f", from=1-1, to=1-2]
        \arrow[hook, from=2-1, to=1-1]
        \arrow[hook, from=2-1, to=2-2]
        \arrow["{g\mapsto g|_W }"'{pos=0.45}, from=2-2, to=1-2]
      \end{tikzcd}\]
  \item\label{F-generic} Given a Seidenberg sheaf $\Fc$, a complex number $a$ is \emph{$\Fc$-generic} if for any connected open neighborhood $U \ni a$ and $K \subseteq \Fc(U)$ and $L \supseteq K$ as in the previous bullet point, we can find $W\subseteq U$ and $f : L \to \Fc(W)$ as before but additionally satisfying that $a \in W$.
  \item\label{locally-countable} A sheaf $\Fc \subseteq \Kc$ is \emph{locally countable} if for every $z$ and neighborhood $U \ni z$, there is a $V \subseteq U$ with $z \in V$ such that $|\Fc(V)| \leq \aleph_0$.\footnote{A more robust general definition of local countability would be to say that $\Fc$ is the sheafification of a presheaf $\Fc^0 \subseteq \Fc$ satisfying that $\Fc^0(U)$ is countable for every open set $U$, but in the context we care about this is equivalent to the stated definition.}
  \end{enumerate}
\end{defn}

So in other words, \cref{defn:Seidenberg-sheaf} part (\ref{Seidenberg-sheaf}) just takes the conclusion of \cref{lem:Seidenberg-lemma} and names it as a property. Note that if $\Fc$ is a sheaf of meromorphic functions, then $\Fc$ is locally countable if and only if $|\Fc(U)|\leq\aleph_0$ for every connected $U$.

We will use the following observation a few times.

\begin{lem}\label[lem]{lem:Seidenberg-basic-test}
  Fix a basis $B$ of connected open subsets of $\Cb$. If $\Fc \subseteq \Kc$ is a subsheaf satisfying \cref{defn:Seidenberg-sheaf} part (\ref{Seidenberg-sheaf}) for open sets $U$ in $B$, then $\Fc$ is a Seidenberg sheaf.
\end{lem}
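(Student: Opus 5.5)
Fix an arbitrary non-empty connected open $U \subseteq \Cb$, a finitely generated differential ring $K \subseteq \Fc(U)$, and a finitely generated abstract differential field extension $L \supseteq K$. We must produce a non-empty connected open $W \subseteq U$ and an embedding $f : L \to \Fc(W)$ extending $K \to \Fc(U) \to \Fc(W)$. We must also check that $\Fc(U)$ is a field.

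The reduction is to pass to a basic set. Pick any $V \in B$ with $V \subseteq U$; such a $V$ exists, and it is non-empty and connected, since $B$ is a basis of connected open sets. Restriction $g \mapsto g|_V$ is a differential ring homomorphism $\Fc(U) \to \Fc(V)$. Because $U$ is connected and elements of $\Fc(U)\subseteq \Kc(U)$ are meromorphic, the identity theorem makes this restriction injective. Let $K_V$ be the image of $K$. It is a finitely generated differential subring of $\Fc(V)$, isomorphic to $K$ via restriction. Transporting the extension $L \supseteq K$ along this isomorphism gives a finitely generated differential field extension $L_V \supseteq K_V$; formally, we replace $L$ by a copy in which $K$ is identified with $K_V$. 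Applying the hypothesis to $V \in B$ yields a non-empty connected open $W \subseteq V$ and an embedding $L_V \to \Fc(W)$ compatible with restriction $\Fc(V)\to\Fc(W)$. Composing with $L \cong L_V$ gives $f : L \to \Fc(W)$. Commutativity of the required square follows because restriction from $U$ to $W$ factors through $V$.

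It remains to see that $\Fc(U)$ is a field for connected $U$ not in $B$. Take $0 \neq g \in \Fc(U)$. Then $1/g \in \Kc(U)$, and we need $1/g \in \Fc(U)$. Cover $U$ by sets $V \in B$ with $V \subseteq U$. On each such $V$, $g|_V \neq 0$ by the identity theorem, and $\Fc(V)$ is a field by hypothesis, so $1/(g|_V) \in \Fc(V)$. These local inverses agree on overlaps, since they are all restrictions of the meromorphic function $1/g$. The sheaf gluing axiom for $\Fc$ therefore produces a section of $\Fc(U)$ equal to $1/g$.

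The main subtlety is bookkeeping rather than analysis. One point is the identification of $K$ with its restriction: this uses injectivity of restriction on connected sets, which requires $\Fc \subseteq \Kc$. The other is invoking the sheaf condition for inverses, which uses $\Fc$ being a subsheaf, not merely a subpresheaf.
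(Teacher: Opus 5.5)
Your proposal is correct and follows essentially the same route as the paper: restrict $K$ to a basic connected $V \subseteq U$ (injective by the identity theorem), transport $L$ along that isomorphism, apply the hypothesis on $V$, and compose; the field condition comes from gluing local inverses over basic sets. Your version just spells out the injectivity of restriction and the gluing step that the paper compresses into ``gluing and the fact that division is computed pointwise.''
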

\begin{proof}
  The condition that $\Fc(U)$ be a field for every connected open $U$ follows from gluing and the fact that division is computed pointwise.

  For the second condition, fix $U$, $K \subseteq \Fc(U)$, and $L \supseteq K$ as in \cref{defn:Seidenberg-sheaf} part (\ref{Seidenberg-sheaf}). Find $U' \subseteq U$ with $U' \in B$. The image $K'$ of $K$ under the restriction map $g \mapsto g|_{U'}$ is isomorphic to $K$, so we can find a field extension $L' \supseteq K'$ such that $L / K$ and $L' / K'$ are isomorphic field extensions. Applying the assumed condition to $U'$, $K' \subseteq \Fc(U')$, and $L' \supseteq K'$ gives an open set $W \subseteq U'$ and a field embedding $f : L' \to \Fc(W)$. Composing $f$ with the isomorphism $L\cong L'$ now gives the required map. (See Figure~\ref{fig:diag-basic-test}.)
\end{proof}

  \begin{figure}
  \centering
  \begin{tikzpicture}

    \draw (-3.25,-0.25) node {\begin{tikzcd}
        L & {L'} & {\mathcal{K}(W)} \\
        & {K'} & {\mathcal{K}(U')} \\
        K && {\mathcal{K}(U)}
        \arrow["\cong", from=1-1, to=1-2]
        \arrow["f", from=1-2, to=1-3]
        \arrow[hook, from=2-2, to=1-2]
        \arrow[hook, from=2-2, to=2-3]
        \arrow["{g\mapsto g|_{W}}"', from=2-3, to=1-3]
        \arrow[hook, from=3-1, to=1-1]
	\arrow["\cong"{sloped}, from=3-1, to=2-2]
        \arrow[hook, from=3-1, to=3-3]
        \arrow["{g\mapsto g|_{U'}}"', from=3-3, to=2-3]
      \end{tikzcd}};

    \begin{scope}[shift={(1,0)},scale=1.5]
      \draw plot [smooth cycle,tension=1.1] coordinates {(-0.2,0) (1,1) (2,0) (1,-0.5) (0,-1.5)};
      \draw (0,-0.85) node {$U$};
      \draw [dotted] plot [smooth cycle,tension=1.1] coordinates {(0.3,0.5) (1.25,0.8) (1.7,0.3) (1.2,-0.3) (0.2,-0.5)};
      \draw (0.3,-0.15) node {$U'$};
      \draw [dashed,fill=black,fill opacity=0.1] plot [smooth cycle,tension=1.1] coordinates {(0.4,0.3) (1.25,0.7) (1.2,0.3) (1.2,-0.15)};
      \draw (1,0.25) node {$W$};
    \end{scope}
  \end{tikzpicture}
  \caption{The construction in \cref{lem:Seidenberg-basic-test}.}
  \label{fig:diag-basic-test}
\end{figure}

We can now give our refactored proof of Seidenberg's embedding theorem. Recall that given a sheaf $\Fc$, we write $\Fc_a$ for the stalk of $\Fc$ at $a$ (i.e., the collection of germs of sheaves at $a$).

\begin{prop}\label[prop]{prop:Seidenberg-refactor}
  Let $\Fc \subseteq \Kc$ be a subsheaf of meromorphic functions.
  \begin{enumerate}
  \item\label{countable-closure} If $\Fc$ is locally countable, then there is a locally countable Seidenberg sheaf $\Fc' \subseteq \Kc$ with $\Fc \subseteq \Fc'$.
  \item\label{Seidenberg-stalk} If $\Fc$ is a Seidenberg sheaf, then for any $\Fc$-generic $a$, $\Fc_a$ is a universal differential field (and so is differentially closed).
  \item\label{Seidenberg-generic} If $\Fc$ is a locally countable Seidenberg sheaf, then the set of $\Fc$-generic complex numbers is comeager in $\Cb$ (and so in particular is non-empty). Moreover, for any two $\Fc$-generic complex numbers $a$ and $b$, $\Fc_a$ and $\Fc_b$ are isomorphic as differential fields.
  \end{enumerate}
\end{prop}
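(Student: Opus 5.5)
For part (\ref{countable-closure}) I would build $\Fc'$ as the union of an increasing chain $\Fc = \Fc_0 \subseteq \Fc_1 \subseteq \cdots$ of locally countable subsheaves of $\Kc$. Fix a countable basis $B$ of connected open sets, say discs with rational centers and radii. Given $\Fc_k$, note that for each $U \in B$ the field $\Fc_k(U)$ is countable. (Local countability together with the remark after \cref{defn:Seidenberg-sheaf} gives this for connected $U$.) Hence there are only countably many triples $(U, K, L)$ with $K \subseteq \Fc_k(U)$ finitely generated and $L \supseteq K$ a finitely generated extension taken up to isomorphism over $K$.

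For each such triple, \cref{lem:Seidenberg-lemma} supplies a connected $W \subseteq U$ and an embedding $f : L \to \Kc(W)$. Shrinking $W$ to a member of $B$ loses nothing. Let $\Fc_{k+1}$ be the subsheaf of $\Kc$ generated by $\Fc_k$ together with the countably many germs of images $f(L)$ on the corresponding $W$'s, closed under differential field operations. Concretely, on each basic $V$ I take the differential field generated by the restrictions to $V$ of the countably many new functions whose $W \supseteq V$, and then sheafify. This keeps $\Fc_{k+1}(V)$ countable for basic $V$, so $\Fc_{k+1}$ is locally countable.

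The union $\Fc'$ is again locally countable, since a countable union of countable sets is countable. It satisfies the Seidenberg condition on every $U \in B$, because any finitely generated $K \subseteq \Fc'(U)$ already lies in some $\Fc_k(U)$. \cref{lem:Seidenberg-basic-test} then finishes this part. The main care needed is checking that sheafification preserves countability on connected sets; it does, because a meromorphic function on a connected set is determined by its germ on a small basic disc.

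For part (\ref{Seidenberg-stalk}), let $K \subseteq \Fc_a$ be finitely generated and $L \supseteq K$ finitely generated. Represent the generators of $K$ on a connected neighborhood $U \ni a$. $\Fc$-genericity gives a $W \ni a$ and an embedding $L \to \Fc(W)$ compatible with restriction. Composing with the germ map $\Fc(W) \to \Fc_a$ is injective because $W$ may be taken connected and $\Fc(W)$ is a field, so by the identity theorem nonzero elements have nonzero germs. This gives the embedding required by \cref{defn:universal-field}. $\Fc_a$ is a field since each $\Fc(U)$ with $U$ connected is.

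For part (\ref{Seidenberg-generic}), the failure of genericity is witnessed by countably much data. For each basic $U \in B$ and each of the countably many pairs $(K, L)$ over $\Fc(U)$, let $D_{U,K,L}$ be the union of all connected $W \subseteq U$ admitting a compatible embedding $L \to \Fc(W)$. Since $\Fc$ is Seidenberg, the Seidenberg condition applied to every basic subdisc $U' \subseteq U$ (via restricted $K$) shows that $D_{U,K,L}$ is open and dense in $U$. Let $G$ be the set of $a$ lying in $D_{U,K,L} \cup (\Cb \setminus \cl U)$ for all countably many indices. Then $G$ is comeager, and every $a \in G$ is generic, after reducing an arbitrary neighborhood to a basic one as in \cref{lem:Seidenberg-basic-test}.

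For the isomorphism claim, $\Fc_a$ is countable and universal, hence a countable $\aleph_0$-saturated model of \DCF. Such a model is unique up to isomorphism, namely the countable saturated model, so $\Fc_a \cong \Fc_b$. I expect the main obstacle to be the bookkeeping in part (\ref{countable-closure}) to keep the sheaf locally countable.
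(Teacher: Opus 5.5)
Your proposal is correct and follows the paper's proof closely. It uses the same countable tower of Seidenberg-lemma witnesses over a countable basis for part (1), and the same unwinding of definitions for part (2). For part (3) it uses the same Baire category argument, followed by uniqueness of the countable saturated model of \DCF{}.

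The one genuine variation is in part (3). The paper first upgrades the Seidenberg condition, via the enumeration-and-gluing argument of \cref{lem:Seidenberg-dense}, to a single differential ring embedding $L \to \Fc(W_{i,K,L})$ on a dense, possibly disconnected, open $W_{i,K,L} \subseteq U_i$. You instead take $D_{U,K,L}$ to be the union of \emph{all} connected $W \subseteq U$ admitting a compatible embedding. Openness is then free, and density follows by applying the Seidenberg condition on each basic subdisc. This bypasses the gluing entirely, which genericity never needed, since a generic point only has to lie in one good connected $W$.

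Two small repairs are needed in part (1).

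\begin{itemize}
\item An increasing union of subsheaves of $\Kc$ need not be a sheaf: a section over a non-compact $U$ may be locally in $\Fc_k$ only for unboundedly many $k$. So, as the paper does, you must sheafify the union. After that, a finitely generated $K \subseteq \Fc'(U)$ need not lie in any single $\Fc_k(U)$. It only lies in some $\Fc_k(U')$ after restricting to a smaller basic $U' \subseteq U$. That suffices by the argument of \cref{lem:Seidenberg-basic-test}, and your identity-theorem argument for countability of sheafifications on connected sets still gives local countability.
\item Your \emph{hence}, that there are only countably many finitely generated extensions $L \supseteq K$ up to isomorphism over $K$, does not follow from countability of $K$ alone. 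The paper justifies it by $\aleph_0$-stability of \DCF{}; the Ritt--Raudenbush basis theorem would also work. The same count is used again in part (3).
\end{itemize}
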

\begin{proof}
  Fix a countable basis $(U_i)_{i < \omega}$ of connected open subsets of $\Cb$.

  (\ref{countable-closure})  Let $\Fc^0 = \Fc$. Given $\Fc^{n}$, construct $\Fc^{n+1}$ as follows: For each $i < \omega$, finitely generated differential field $K \subseteq \Fc(U_i) \subseteq \Kc(U_i)$ and finitely generated abstract differential field extension $L \supseteq K$ (up to isomorphism of differential field extensions of $K$), find $W_{i,K,L} \subseteq U_i$ and a differential field embedding $f_{i,K,L} : L \to \Kc(W_{i,K,L})$ as in \cref{lem:Seidenberg-lemma}. Let $\Fc^{n+1}$ be the smallest subsheaf of $\Kc$ containing $\Fc^n$ and the sections $f_{i,K,L}(L) \subseteq \Kc(W_{i,K,L})$ and satisfying that for any non-empty connected open set $U$, $\Fc^{n+1}(U)$ is a field. Since \DCF{} is $\aleph_0$-stable, there are only countably many triples $(i,K,L)$ (since we are considering $L$ up to isomorphism), so it easily follows that $\Fc^{n+1}$ is locally countable. Finally, let $\Fc'$ be the sheafification of the presheaf $\Fc^\omega(U) \coloneq \bigcup_{n < \omega}\Fc^n(U)$ (i.e., $\Fc'$ is the smallest subsheaf of $\Kc$ satisfying $\Fc^\omega(U) \subseteq \Fc'(U)$ for each open $U$). Again, $\Fc'$ is easily seen to be locally countable. It is now immediate that $\Fc'$ satisfies the Seidenberg sheaf condition for open sets of the form $U_i$, so $\Fc'$ is a Seidenberg sheaf by \cref{lem:Seidenberg-basic-test}.

  (\ref{Seidenberg-stalk}) is an exercise in definitions.

  (\ref{Seidenberg-generic}) By the same argument as in \cref{lem:Seidenberg-dense}, $\Fc$ satisfies that for any non-empty open $U$, finitely generated differential field $K \subseteq \Fc(U)$, and finitely generated abstract differential field extension $L \supseteq K$, there is an open $W \subseteq U$ dense in $U$ (with $W$ not necessarily connected) and a differential ring embedding $f : L \to \Fc(W)$ (which restricts to a field embedding on each connected component of $W$) such that the diagram in \cref{defn:Seidenberg-sheaf} part (\ref{Seidenberg-sheaf}) commutes.

  For each $i$ and choice of $K \subseteq \Fc(U_i)$ and $L \supseteq K$ (again up to isomorphism) as in the previous paragraph, fix a corresponding choice of open $W_{i,K,L}\subseteq U_i$ dense in $U_i$. Let $V_{i,K,L} = W_{i,K,L} \cup \tint (\Cb \setminus U_i)$. Clearly $V_{i,K,L}$ is a dense open set. There are (again) only countably many triple $(i,K,L)$. Therefore $X \coloneq \bigcap_{i,K,L} V_{i,K,L}$ is comeager and is non-empty by the Baire category theorem. It is now easy to verify that any element of $X$ is $\Fc$-generic (again using \cref{lem:Seidenberg-basic-test}).

  Finally, to see that for any $\Fc$-generic $a$ and $b$, $\Fc_a$ is isomorphic to $\Fc_b$, note that $\Fc_a$ and $\Fc_b$ are both countable $\aleph_0$-saturated models of \DCF{} and are therefore elementarily equivalent saturated structures of the same cardinality.
\end{proof}

So by \cref{prop:Seidenberg-refactor} every locally countable Seidenberg sheaf gives us not a single differentially closed field but a whole family of them, indexed by some comeager set of points in $\Cb$. If we had a `natural' example of such a sheaf, we could argue for a similar level of naturality to something like the probabilistic construction of the random graph.

There actually are fairly easy to describe locally countable Seidenberg sheaves, depending on who you're talking to. For example, say that a meromorphic function $f$ is \emph{locally computable} if for every rational $a$ in the domain of $f$, the Taylor series of $f$ at $a$ is a uniformly computable (in the sense of computable analysis) sequence of complex numbers. It is almost immediate that the collection of such functions forms a locally countable sheaf on $\Cb$. It is also tedious, but not fundamentally difficult, to verify that this is a Seidenberg sheaf. To a computability theorist, this is a perfectly natural sheaf to consider, and unlike the sheaves built with \cref{prop:Seidenberg-refactor} part (\ref{countable-closure}), it requires no arbitrary choices.

Nevertheless, there is something unsatisfying about needing to specify a proper subsheaf of $\Kc$ at all. $\Kc$ itself is already a Seidenberg sheaf; that's why we're talking about the concept in the first place. Wouldn't it be better if we could just use $\Kc$, the most canonical choice of subsheaf of $\Kc$, as our sheaf directly?

But now we've arrived at our childish idea again. There are no $\Kc$-generic complex numbers. This is a manifestation of the hay-in-a-haystack phenomenon: If we squint, we can see a uniform generic behavior in the space $\Cb$, a behavior that almost all\footnote{In the sense of the Baire category theorem rather than in the sense of measure theory.} points in $\Cb$ `should' exhibit. So all we need to do is grab a stalk of hay out of the haystack. This should be easy, given that `most' of the haystack is hay, but if we try to reach in and grab a specific complex number $a$, we always find a non-generic needle, as witnessed by $\frac{1}{z-a}$.

This seems like an impossible puzzle. It would be enough to have a complex number that sits inside every dense open subset of $\Cb$, but this excludes every complex number that exists in the mathematical universe. Let's indulge for a moment in even more childishness and fantasy. What if there were other, secret complex numbers, nestled between the familiar complex numbers we know and love, yet hidden in the shadows outside the universe? Such a complex number would surely not be the pole of any meromorphic function in our universe, since a pole has to be a pole \emph{somewhere}. Moreover, it could easily be the case that some such number sits inside all dense open subsets of $\Cb$ in our universe (or, more specifically, all dense open sets that can be described by data in our universe). Assuming we take $\Kc$ to be the sheaf of meromorphic functions from our universe, such a number $a$ would be $\Kc$-generic and by \cref{prop:Seidenberg-refactor} part (\ref{Seidenberg-stalk}), the stalk $\Kc_a$ would be differentially closed.

Colorful cosmic Platonism aside, this is essentially the idea behind set-theoretic forcing, or at least behind Cohen forcing, the original kind of forcing used by Cohen to establish the independence of the continuum hypothesis.

\subsection{Re-reproving Seidenberg's theorem with Cohen's}
\label{sec:re-reproving}

\epigraph{When the manuscript with Cohen's proof of the independence of the Continuum Hypothesis came to Wroc\l{}aw, Ryll-Nardzewski took it home for the evening. Next day he came to the Institute and commented ``Well, it is just the Baire theorem.''}{Wroc\l{}aw urban legend \cite[Sec.~5.2]{borodulin2023wroclaw}}

Cohen's original approach to forcing---the approach of using countable transitive models---proceeds by appealing to the L\"owenheim-Skolem theorem (or in other words to Skolem's paradox), which essentially amounts to a mathematical conspiracy akin to the 1998 film \emph{The Truman Show}. In the movie, the main character's whole life is, unbeknownst to him, merely a television show being filmed on a giant island-sized set. Similarly, a countable transitive model $M$ appears from the inside to be a perfectly ordinary mathematical universe, containing objects that, for instance, seem uncountable, but secretly $M$ is contained in some much larger universe in which $M$ is merely a countable set.

For a countable transitive model $M$, the set $\Cb \cap M$ of complex numbers in $M$ is a proper subset of the true set of complex numbers $\Cb$. We would like to be able to talk about the sheaf $\Kc^M$ of meromorphic functions that $M$ is `able to see' as a subsheaf of $\Kc$, the actual sheaf of meromorphic functions. The issue is that meromorphic functions in $M$ aren't literally meromorphic functions, because they are only functions on $\Cb\cap M$, not on $\Cb$. This is fairly easy to deal with, since objects that $M$ identifies as meromorphic functions are still uniformly continuous and admit unique extensions to actual meromorphic functions. When talking about sheaves, there's an additional wrinkle which is that a given open set might not be `visible' to $M$ either. This is why the following definition is maybe a little bit less straightforward than it seems like it should be.

\begin{defn}\label[defn]{defn:M-meromorphic-sheaf}
  Given a transitive model $M$ of \ZFC{} and an open set $U$, $\Kc^M(U)$ is the differential ring of meromorphic functions $f$ on $U$ satisfying that for each $z \in U$, there is an open $V \ni z$ with $V \subseteq U$ such that $(f|_V) \cap M \in M$.\footnote{This can be characterized similarly to local computability. In particular, $\Kc^M(U)$ is the set of $f \in \Kc(U)$ satisfying that for each rational $z$ in the domain of $f$, the Taylor series of $f$ at $z$ is an element of $M$.}
\end{defn}

It is immediate\footnote{One might worry that the extensions of what $M$ thinks are meromorphic functions to actual functions might not be meromorphic, but this isn't an issue because $M$ can compute derivatives correctly. In general, though, one of the biggest difficulties with becoming comfortable with forcing and other ideas from set theory that involve passing between different models is getting a handle on what facts are \emph{absolute} (i.e., remain true when passing to and from $M$) and what facts are not.} that $\Kc^M$ is a subsheaf of $\Kc$. $\Kc^M$ is isomorphic (in a sense that is slightly obnoxious to specify carefully) to $M$'s internal sheaf of meromorphic functions (i.e., the object you get if you build the sheaf of meromorphic functions while trapped inside $M$).

Forcing is all about `adding new elements' to a given model of \ZFC. Given a countable transitive model $M$ and some object $G$ not in $M$, we'd like to be able to talk about the model $M[G]$ of \ZFC{} `generated by' $M\cup \{G\}$. This doesn't always make sense, however, and the easiest way to ensure that it does is for $G$ to be in some sense `generic relative to $M$.'

Often (and in the case we care about here), forcing is specifically about adding new real numbers (or rather complex numbers for us, although there isn't really a difference set-theoretically). The original notion of genericity in this context is the following.

\begin{defn}\label[defn]{defn:Cohen-generic}
  Fix a transitive model $M$ of \ZFC.
  \begin{enumerate}
  \item Given $U_0 \subseteq \Cb \cap M$ with $U_0 \in M$, we say that $U_0$ \emph{codes the open set $U$} if for every $z\in \Cb$, $z \in U$ if and only if there is an open neighborhood $V \ni z$ such that $V \cap M \subseteq U_0$.
  \item An open set $U \subseteq \Cb$ is \emph{coded in $M$} if it is coded by some $U_0 \in M$.
  \item A complex number $a$ is \emph{Cohen generic over $M$} if for every dense open $U \subseteq \Cb$ coded in $M$, $a \in U$.
  \end{enumerate}
\end{defn}

Transitive models aren't required to be countable, and it can be the case that Cohen generics exist over uncountable transitive models, but \cref{defn:Cohen-generic} makes it clear what Ryll-Nardzewski's point was. If $M$ is a countable transitive model, then there are complex numbers Cohen generic over $M$ just by the Baire category theorem.

\begin{prop}\label[prop]{prop:ZFC-Seidenberg}
  Fix a transitive model $M$ of \ZFC.
  \begin{enumerate}
  \item\label{ctm-Seidenberg-sheaf} $\Kc^M$ is a Seidenberg sheaf.
  \item\label{DCF-Cohen-stalk} If $a \in \Cb$ is Cohen generic over $M$, then it is $\Kc^M$-generic.
  \end{enumerate}
\end{prop}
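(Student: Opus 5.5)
For part (\ref{ctm-Seidenberg-sheaf}), the plan is to apply Seidenberg's lemma (\cref{lem:Seidenberg-lemma}) inside $M$ and then transfer the result out by absoluteness. By \cref{lem:Seidenberg-basic-test}, it suffices to verify the Seidenberg condition for $U$ in a fixed basis of connected open sets coded in $M$, for instance open discs with rational centers and radii. For such a $U$, every element of $\Kc^M(U)$ is locally in $M$. However, a finitely generated $K \subseteq \Kc^M(U)$ need not have its generators globally in $M$, so the first step is to shrink $U$ to a smaller rational disc $U'$ on which each of the finitely many generators $g_1,\dots,g_k$ satisfies $(g_j|_{U'})\cap M \in M$. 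This shrinking is harmless by the same argument as in \cref{lem:Seidenberg-basic-test}.

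Next, $K$ restricted to $U'$ is a finitely generated differential field whose generators lie in $M$. Any finitely generated extension $L \supseteq K$ is determined up to isomorphism by a finite presentation: a prime differential ideal of $K\{x_1,\dots,x_r\}$, which is finitely generated as a radical differential ideal by the Ritt--Raudenbush basis theorem. Using that presentation, we can find an isomorphic copy of $L/K$ in $M$. Applying \cref{lem:Seidenberg-lemma} inside $M$, which is a model of \ZFC, yields (in $M$) a connected open $W\subseteq U'$ and an embedding into $M$'s meromorphic functions on $W$.

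Finally, we transfer this back to the real universe. The functions $M$ regards as meromorphic extend uniquely to genuine meromorphic functions on the open set coded by $W$. Connectedness, the differential algebraic identities, and nonvanishing are all preserved, because they are determined by Taylor coefficients at rational points, which are absolute. The image therefore lies in $\Kc^M(W)$, and commutativity of the diagram is preserved. One also checks that $\Kc^M(U)$ is a field for connected $U$: reciprocals are computed locally in $M$.

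For part (\ref{DCF-Cohen-stalk}), the plan is to internalize the density argument of \cref{lem:Seidenberg-dense} (and of \cref{prop:Seidenberg-refactor} part (\ref{Seidenberg-generic})) inside $M$. Let $a$ be Cohen generic, and take a connected $U \ni a$, with $K$ and $L$ as in the definition. As before, we first shrink to a rational disc $U'\ni a$ on which the generators of $K$ lie in $M$, and take a copy of $L$ in $M$. Inside $M$, we run the construction of \cref{lem:Seidenberg-dense} to obtain an open $W\subseteq U'$ dense in $U'$, together with an embedding of $L$ into the meromorphic functions on $W$ that is a field embedding on each component. Then $V = W\cup \tint(\Cb\setminus U')$ is a dense open set coded in $M$, so $a\in V$. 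Since $a\in U'$, this gives $a\in W$. Letting $W_0$ be the component of $W$ containing $a$ and restricting to $W_0$ gives the required embedding $L\to \Kc^M(W_0)$ with $a\in W_0$.

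The main obstacle is the absoluteness bookkeeping. We must check that ``dense'' and ``coded open set'' as computed in $M$ agree with the real notions, which holds because density can be checked against rational discs. We must also check that the connected components and the restricted functions seen in $M$ correspond to genuine ones in $\Kc^M$.
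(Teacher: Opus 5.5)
Your proposal is correct and is essentially the paper's proof. Apply Seidenberg's lemma (for part (2), the dense-open variant from \cref{lem:Seidenberg-dense} and \cref{prop:Seidenberg-refactor}) inside $M$, transfer the result out, reduce to a basis via \cref{lem:Seidenberg-basic-test}, and use that the dense open set $W\cup\tint(\Cb\setminus U')$ is coded in $M$ and therefore contains $a$. Two of your steps are more careful than the paper, which only appeals briefly to $\aleph_0$-stability or leaves the point implicit: using Ritt--Raudenbush to get a copy of $L/K$ in $M$, and shrinking to a rational disc on which the generators of $K$ lie in $M$.
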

\begin{proof}
  (\ref{ctm-Seidenberg-sheaf}) Since $M$ is a model of (enough of) \ZFC, it satisfies \cref{lem:Seidenberg-lemma} internally. This (together with the fact that \DCF{} is $\aleph_0$-stable, so every isomorphism type of finitely generated characteristic $0$ differential fields is represented in $M$) implies that $\Kc^M$ satisfies \cref{defn:Seidenberg-sheaf} part (\ref{Seidenberg-sheaf}) for open sets $U \subseteq \Cb$ coded in $M$. Therefore by \cref{lem:Seidenberg-basic-test}, $\Kc^M$ is a Seidenberg sheaf.

  (\ref{DCF-Cohen-stalk}) This proceeds similarly to the proof of \cref{prop:Seidenberg-refactor} part (\ref{Seidenberg-generic}) (and so also similarly to the proof of \cref{lem:Seidenberg-dense}). Work internally in $M$. For any non-empty connected open set $U \subseteq \Cb$, finitely generated differential field $K \subseteq \Kc(U)$, and finitely generated differential field extension $L \supseteq K$, there is an open $W \subseteq U$ dense in $U$ (with $W$ not necessarily connected) and a differential ring embedding $f : L \to \Kc(W)$ such that the diagram in Figure~\ref{fig:Seidenberg-1} commutes.

  Now suppose that $a$ is Cohen generic over $M$. For any such $W \in M$ coding an open set $W' \subseteq \Cb$, we must have that $a \in W' \cup \tint(\Cb \setminus W')$. Since we can do this for any such $W$, we have that $a$ is $\Kc^M$-generic, as required.
\end{proof}

Given any countable transitive model $M$, there are complex numbers Cohen generic over $M$, so for any such $a \in \Cb$, the stalk $\Kc^M_a$ is a universal differential field. In particular, this is a substalk of $\Kc_a$. Since every countable characteristic $0$ differential field embeds into a universal differential field, we have a third proof\footnote{Strictly speaking \ZFC{} itself doesn't prove that transitive models of \ZFC{} exist, but like almost all size issues, this can be dealt with a number of ways. One way is to replace \ZFC{} with a `large enough' fragment of \ZFC. Another way is to just assume the existence of some very mild large cardinals.} of Seidenberg's embedding theorem.

Now, one may object that all we've done is rephrased our second proof, the argument in \cref{sec:reproving-with-Baire} (itself a rephrasing of the original proof), in terms of set-theoretic forcing, but as Ryll-Nardzewski reportedly commented, forcing is in some deep sense `just' the Baire category theorem. That said, I also do think there's value in this perspective (otherwise I wouldn't be writing about it). Once you become comfortable with it, it can be a fruitful organizational framework, at least for seeing that something should be true, such as the proof of \cref{fact:perf-soln} alluded to in the appendix.

\subsection{Boolean-valued models and $\neg\neg$-sheaves}
\label{sec:Boolean-valued}

The approach to forcing in terms of countable transitive models has a similar limitation to that of \cref{prop:Seidenberg-refactor}. Specifically, when we want to reason about meromorphic functions, we need to pick some specific at most countable set of them to embed into a differentially closed field. Likewise, when we build our little fake universe $M$, we have to decide which countable bundle of data to include. It is more convenient (and, hence, often done by set theorists) to pretend that we already live in the fake universe. In terms of our earlier metaphor, this is like thinking of the whole world as a stage, rather than building Truman's island-sized set. This is the same reversal that is suggested in general by simulated reality thought experiments, such as \emph{The Truman Show} or Plato's allegory of the cave. Contemplating a miniature simulacrum of the entire world invites the question of whether the world we live in might likewise be a fa\c{c}ade. (See Figure~\ref{fig:Flammarion}.)

\begin{figure}
  \centering
  \includegraphics[width=\textwidth]{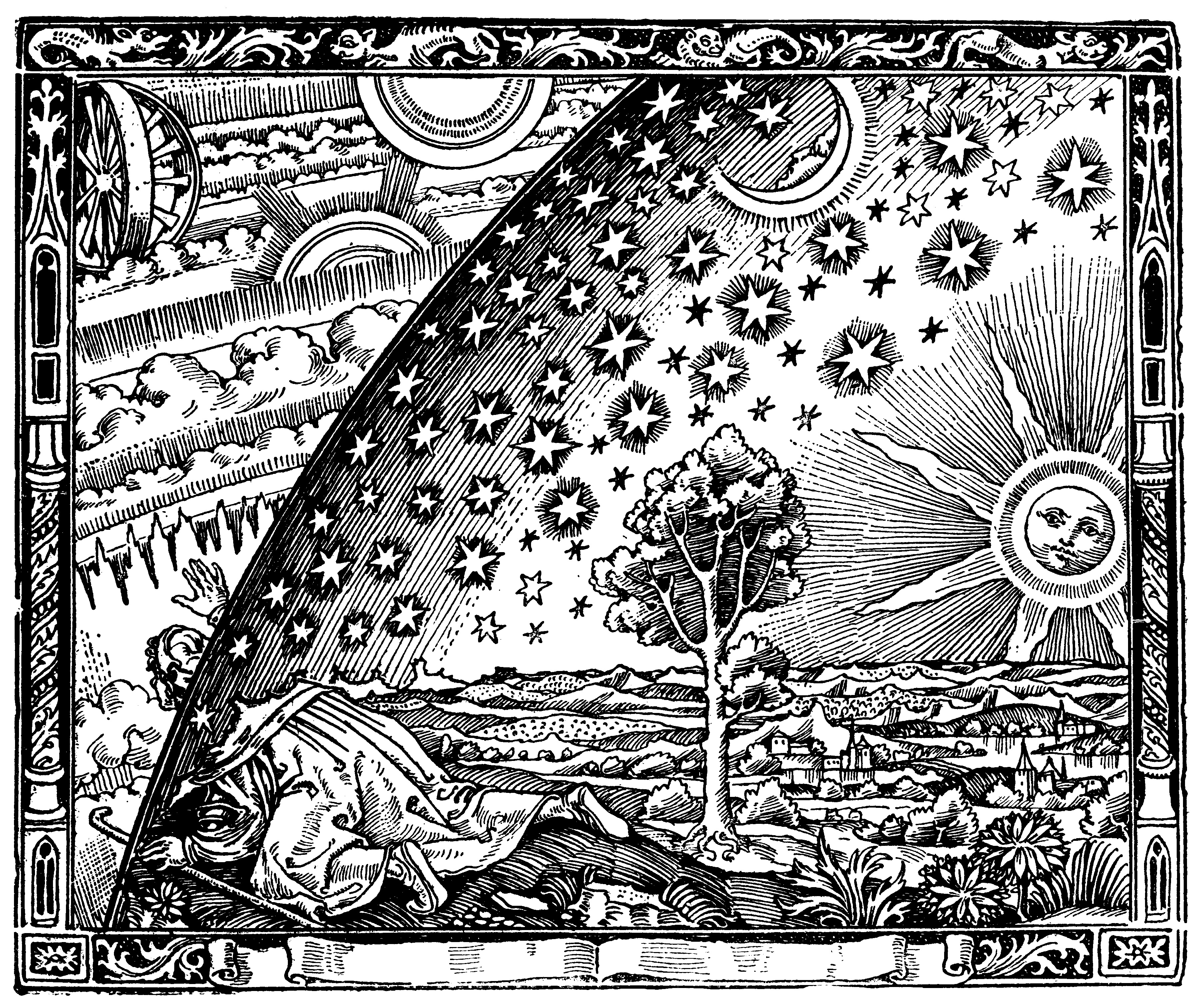}
  \caption{Unknown artist. \emph{Flammarion engraving}. Circa 1500-1800. Wood engraving.}
  \label{fig:Flammarion}
\end{figure}

Making this approach to forcing rigorous can be done in a couple of ways, one of which is the formalism of Boolean-valued models. Boolean-valued models make sense for arbitrary first-order theories, not just set theory, but they're primarily discussed in the context of forcing. The idea is that we replace the ordinary set of two truth values with some arbitrary complete Boolean algebra $B$. What this allows us to do is talk about what properties a, say, Cohen generic complex number \emph{would} have if it existed, regardless of whether it actually exists or not. Crucially though, this can all take place `inside the actual universe.'

The full formalism of Boolean-valued models of set theory is a bit much to get into here, but the basic idea is that one considers a hierarchy of sets $V^B$ (defined recursively in the same way as von Neumann's cumulative hierarchy $V$) where set membership takes values in the complete Boolean algebra $B$, rather than the familiar $\{\text{true},\text{false}\}$.

We can elaborate on this idea in the context of $\Kden$ specifically. Let $\Lc(\Kden)$ be the language of differential rings extended with constants for each element of $\Kden$. Let $\RO(\Cb)$ be the (complete) Boolean algebra of regular open subsets of $\Cb$. We should pause to note that $\RO(\Cb)$ is precisely the quotient of the lattice of opens in $\Cb$ by the equivalence relation of agreeing on a dense open set (i.e., $U \sim V$ if there is a dense open set $W$ such that $U\cap W = V \cap W$), which perhaps bears a resemblance to \cref{defn:Kden-ring}.

Define a $\RO(\Cb)$-valued truth assignment $\nu$ on $\Lc(\Kden)$-sentences as follows:
\begin{itemize}
\item For any equation of terms $a = b$, we can interpret $a$ and $b$ as elements of $\Kden$ in the obvious way. Then $\nu(a = b)$ is $\tint(\cl\{z \in \Cb : a(z) = b(z)\})$.
\item $\nu(\varphi \wedge \psi) = \nu(\varphi) \cap \nu(\psi)$ and $\nu(\varphi \vee \psi) = \tint(\cl(\nu(\varphi) \cup \nu(\psi)))$.
\item $\nu(\neg\varphi) = \tint(\cl(\Cb \setminus \nu(\varphi)))$.
\item $\nu(\forall x \varphi(x)) = \bigcap_{a \in \Kden} \nu(\varphi(a))$ and $\nu(\exists x \varphi(x)) = \tint\left( \cl \bigcup_{a \in \Kden}\nu(\varphi(a)) \right) $.
\end{itemize}

Now by thinking about the proofs of the statements earlier in this paper, it's not too hard to convince yourself of the following proposition.

\begin{prop}\label[prop]{prop:Bool-valued-DCF}
  For every axiom $\varphi$ of \DCF{}, $\nu(\varphi) = \Cb$. In other words, the valuation $\nu$ makes $\Kden$ into a Boolean-valued model of \DCF.
\end{prop}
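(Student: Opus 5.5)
We need to show each axiom of \DCF{} gets value $\Cb$ under $\nu$. The axioms split into three groups: the universal axioms of differential rings, the field axioms (nontriviality and existence of inverses for non-zero elements), and the differential closure scheme: for all coefficient tuples, if the order of $p$ exceeds that of $q$ then $\exists x\,(p(x)=0\wedge q(x)\neq 0)$. The plan is to first record two bookkeeping facts about $\nu$, then verify each group using the results of \cref{sec:main} and \cref{sec:models}.

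\textbf{Bookkeeping.} Write $Z(a)=\{z:a(z)=0\}$ and $N(a)=\{z: a \text{ vanishes on no neighborhood of } z\}$, so that $\cl Z(a)$ and $N(a)$ are complementary up to closure. Then $\nu(a=0)=\tint(\cl Z(a))$, which is the regular open set corresponding to the idempotent $1-[a][a]^\dagger$. Likewise $\nu(a\neq 0)=\tint(\cl N(a))$, which corresponds to $[a][a]^\dagger$. The key observation is that the infinitary connectives $\forall$ and $\exists$ are computed as meets and joins in $\RO(\Cb)$. So to show $\nu(\forall \vec y\,\psi(\vec y))=\Cb$ it suffices to fix arbitrary $\vec h\in\Kden$ and show $\nu(\psi(\vec h))=\Cb$. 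Moreover, $\nu(\theta\to\chi)=\Cb$ iff $\nu(\theta)\subseteq\nu(\chi)$.

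\textbf{Ring and field axioms.} The equational axioms are identities in $\Kden$, so each equation has value $\tint\cl\Cb=\Cb$. Nontriviality holds because $\nu(0=1)=\varnothing$. For inverses, fix $a$; we need $\nu(a\neq0)\subseteq\nu(\exists x\, ax=1)$. Take $x=[a]^\dagger$. Then $\nu(a a^\dagger=1)$ contains $N(a)\cap\dom$, hence its interior closure contains $\tint\cl N(a)=\nu(a\neq 0)$.

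\textbf{Differential closure scheme.} Fix a generic pair $p_0,q_0$ and coefficients $[h_0],\dots,[h_n]$. The formula "order of $p$ exceeds order of $q$" is a Boolean combination of statements of the form $h_i=0$, $h_i\neq0$. Let $R=\bigcap_{i\le n}\tint\cl N(h_i)\cap\bigcap\dom(h_i)$ be the open set where all $h_i$ are nowhere identically vanishing. Split $\Cb$, up to a dense open set, into the $2^{n+1}$ pieces determined by which $h_i$ vanish identically; each piece is $R$ computed after replacing the vanishing $h_i$ by $0$. On a piece where the order hypothesis holds, apply \cref{lem:Seidenberg-dense} exactly as in the proof of \cref{thm:models-DCF}. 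This produces $W$ dense in the piece and $f$ on $W$ with $p_0(f;\vec h)=0$ and $q_0(f;\vec h)$ nowhere identically vanishing on $W$, using \cref{lem:diff-poly-temp} to ensure the orders stay put on subsets. Glue these $f$'s across the disjoint pieces, extended by $0$ elsewhere, to get a single $g\in\Kden$. Then $\nu(p(g)=0\wedge q(g)\neq0)$ contains every piece where the hypothesis holds. Since $\nu$ of the hypothesis is exactly the regular-open join of those pieces, the implication gets value $\Cb$. Because the existential is witnessed by a single element, its value also dominates.

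\textbf{Main obstacle.} The delicate step is checking that $\nu(\text{ord}\,p>\text{ord}\,q)$, computed syntactically, coincides with the union of the pieces on which the pointwise orders satisfy the hypothesis. This needs \cref{lem:diff-poly-temp} together with the fact that vanishing of leading coefficients is itself decided piecewise by the $h_i$.
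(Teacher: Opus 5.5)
Your proposal is correct and follows the route the paper indicates. The paper gives no detailed proof. It works only the inverse axiom, which is exactly your $[a]^\dagger$ argument with the split into the locally-vanishing and nowhere-locally-vanishing regions, and otherwise says to rerun the proofs of \cref{lem:Seidenberg-dense} and \cref{thm:models-DCF}; that is what you do, with the finite regular-open partition by which $h_i$ vanish taking the place of the ``without loss of generality $[[h_i]]\neq 0$'' step that the maximal ideal settled in \cref{thm:models-DCF}.

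Two cosmetic fixes. You should list the characteristic-$0$ axioms $n\cdot 1\neq 0$, which trivially have value $\Cb$. On a piece where some $h_i$ vanish identically, you should delete those monomials from $p_0,q_0$ (rather than ``replace $h_i$ by $0$''), so that the nowhere-vanishing hypothesis of \cref{lem:Seidenberg-dense} actually applies.
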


As a simple example of \cref{prop:Bool-valued-DCF}, consider the basic field axiom $\forall x (x = 0 \vee \exists y (xy = 1))$. Given $x = [f] \in \Kden$, we have two open sets, the set $U = \{z : f~\text{vanishes in some neighborhood of}~z\}$ and the set $V = \{z : f~\text{is nowhere locally vanishing in some neighborhood of}~z\}$. Locally on $V$, we can choose $y = [f]^\dagger$ as an inverse of $[f]$. We now have that $\nu([f] = 0) = \tint(\cl U)$ and $\nu([f][f]^\dagger = 1) = \nu(\exists y([f]y = 1)) = \tint(\cl V)$. Since the union of $U$ and $V$ is dense, this gives $\nu([f]= 0 \vee \exists y([f]y = 1)) = \Cb$. Since this is the case for every $[f] \in \Kden$, we have that $\nu(\forall x(x = 0 \vee \exists y(xy = 1))) = \Cb$.

We can connect this to the previous account of forcing in the following way. Fix a transitive model $M$ of \ZFC{}. Let
\[
\Kden^M = \{[f] \in \Kden : f \in \Kc^M(U),~U~\text{dense open coded in}~M\}.
\]
Note that $\Kden^M$ is (isomorphic to) $M$'s internal copy of $\Kden$ (i.e., the object you would get if you built $\Kden$ while trapped inside $M$). Since each $[f] \in \Kden^M$ has a representative $f$ whose domain is a dense open set coded in $M$, we have that for any Cohen generic $a$ over $M$, there is a surjective differential ring morphism from $\Kden^M$ to $\Kc^M_a$.\footnote{The astute reader will have noticed that this means that $\Kc^M_a \cong \Kden^M/\mfr$ for some maximal ideal $\mfr$ in $\Kden^M$.} This allows us to think of $\Kc^M_a$ as an $\Lc(\Kden^M)$-structure. Specifically, the constant $[f]$ is interpreted in $\Kc^M_a$ as the germ of $g$ at $a$ for any representative $g$ of $[f]$ with $a \in \dom(g)$.

With this interpretation, we get the following transfer theorem.

\begin{prop}
  Fix a transitive model $M$ and an $\Lc(\Kden^M)$-sentence $\varphi$.
  \begin{enumerate}
  \item For any complex number $a$ which is Cohen generic over $M$, $\Kc^M_a$ satisfies $\varphi$ if and only if $a \in \nu(\varphi)$.
  \item If Cohen generic complex numbers over $M$ exist, then
    \[
      \nu(\varphi) = \tint(\cl\{a \in \Cb : a~\text{is Cohen generic over}~M,~\Kc^M_a~\text{satisfies}~\varphi\}).
    \]
  \end{enumerate}
\end{prop}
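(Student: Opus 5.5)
The argument for part (1) is an induction on the complexity of $\varphi$, and part (2) then follows from (1) by a density argument.

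One point first: the valuation $\nu$ is computed here with $\Kden^M$ as the range of the quantifiers. Equivalently, $\nu$ is computed inside $M$ and the resulting regular open set is interpreted via its code. This is the reading under which $\nu(\varphi)$ is an open set coded in $M$. Write $\nu(\varphi)$ for the true open set coded by $M$'s value.

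\emph{Atomic case.} Let $\varphi$ be $a=b$, with $a,b$ built from constants $[f]\in\Kden^M$ by ring operations and $D$. Take representatives whose domains are dense open sets coded in $M$, and let $h=a-b$. It is meromorphic on a dense open set $U$ coded in $M$, and $a\in U$ by genericity.

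Consider the two coded open sets $Z=\{z: h \text{ vanishes near } z\}$ and $N=\{z: h(z)\neq 0\}$. Their union is dense, and they are coded in $M$ because $h$ is in $M$. Hence $a\in Z\cup N'$, where $N'\subseteq N$ is the set where $h$ is not locally vanishing.

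Actually, more carefully: $Z\cup \tint(\Cb\setminus Z)$ is a coded dense open set, so it contains $a$. If $a\in Z$, the germ of $h$ at $a$ is $0$, and $a\in\nu(a=b)=\tint\cl Z$. If $a\in\tint(\Cb\setminus Z)$, then $h$ is not identically zero on the component containing $a$, so its germ is nonzero. Also $a\notin\tint\cl Z$, because $\tint\cl Z\cap\tint(\Cb\setminus Z)=\varnothing$ when $Z$ is open.

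\emph{Inductive steps.} The same trick handles every step. For any coded regular open $R$, the set $R\cup\tint(\Cb\setminus R)$ is a coded dense open set, so a generic $a$ lies in exactly one of $R$ and $\tint(\Cb\setminus R)$. This decides negation: $a\in\nu(\neg\psi)$ iff $a\notin\nu(\psi)$. Conjunction is immediate. Disjunction reduces to these two via the regular open closure, using the same dichotomy.

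\emph{Existential step.} This is the main obstacle. Let $R=\nu(\exists x\psi)=\tint\cl\bigcup_{c}\nu(\psi(c))$.

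Suppose $\Kc^M_a\models\psi(g_a)$ for some germ $g_a$. The germ comes from some $g\in\Kc^M(V)$ with $V$ coded in $M$ and containing $a$. Extend $g$ by $0$ on $\tint(\Cb\setminus V)$ to get an element $c\in\Kden^M$ whose germ at $a$ is $g_a$. By induction $a\in\nu(\psi(c))\subseteq R$.

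Conversely, suppose $a\in R$. The family $\{\nu(\psi(c))\}_{c\in\Kden^M}$ lies in $M$. By a maximal antichain argument inside $M$ (Zorn's lemma), or more simply by second countability, choose in $M$ a sequence $c_n$ and pairwise disjoint open sets $O_n\subseteq\nu(\psi(c_n))$ whose union is dense in $R$. Then glue the $c_n|_{O_n}$ into a single $c\in\Kden^M$, extended by $0$ elsewhere. This is the mixing property, and it holds in $M$ because $\Kden$ is built from sheaf-like data on dense open sets.

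For this $c$, we get $\nu(\psi(c))\supseteq R$, using that $\nu$ respects restriction to open pieces. That fact itself needs a small lemma, proved by the same induction: $\nu(\psi(c))\cap O=\nu(\psi(c'))\cap O$ whenever $c,c'$ agree on $O$. So $a\in\nu(\psi(c))$, and the induction hypothesis gives $\Kc^M_a\models\psi(c_a)$. The universal quantifier is dual via $\neg\exists\neg$.

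\emph{Part (2).} By (1), the set of generics satisfying $\varphi$ is exactly $G\cap\nu(\varphi)$, where $G$ is the set of Cohen generics over $M$. Once generics exist, $G$ is dense in every nonempty coded open set; for instance, translates by elements of $M$ preserve genericity. Hence $\tint\cl(G\cap\nu(\varphi))=\tint\cl\nu(\varphi)=\nu(\varphi)$, since $\nu(\varphi)$ is regular open.
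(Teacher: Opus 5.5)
Your proof is correct. The paper states this transfer theorem without proof, so there is no argument of its own to compare against. Your induction is the standard truth-lemma argument the surrounding text alludes to:
\begin{itemize}
\item genericity gives the dichotomy ``$a\in R$ or $a\in\tint(\Cb\setminus R)$'' for every regular open $R$ coded in $M$;
\item the existential step uses mixing carried out inside $M$, together with your locality lemma for $\nu$;
\item part (2) follows from (1) once translates by elements of $\Cb\cap M$ show that the generics are dense.
\end{itemize}

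Your opening convention for $\nu$ does real work, so it is worth being explicit about it. The negation and disjunction steps need $\nu(\psi)$ to be coded in $M$. The converse half of the existential step needs the family $c\mapsto\nu(\psi(c))$ to lie in $M$, so that the glued witness lands in $\Kden^M$.

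If you instead read $\nu$ literally as the valuation of \cref{sec:Boolean-valued}, with quantifiers ranging over all of $\Kden$, you would also need that the two valuations agree on $\Lc(\Kden^M)$-sentences. That agreement is true. By \cref{prop:Bool-valued-DCF}, applied in $V$ and in $M$, both are Boolean-valued models of \DCF{}. Quantifier elimination then reduces both values to those of a quantifier-free sentence, and those are computed identically in $V$ and in $M$. But this route uses \cref{prop:Bool-valued-DCF}, which the paper suggests obtaining as a corollary of this very transfer theorem. So your internal reading is the right one.

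Two further conventions in your proof are also the appropriate ones:
\begin{itemize}
\item You use representatives lying in $M$ globally (for example, ``$h$ is in $M$'' in the atomic case, and shrinking to a coded ball where the germ's representative is in $M$ in the forward existential step). This relies on the paper's identification of $\Kden^M$ with $M$'s internal copy of $\Kden$.
\item Your treatment of $\forall$ as $\neg\exists\neg$ uses the regular-open meet $\tint\bigcap$, not the bare intersection written in the paper's clause.
\end{itemize}
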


This kind of reasoning can actually be used to prove \cref{prop:Bool-valued-DCF} as a corollary of \cref{prop:ZFC-Seidenberg}.

Forgetting about transitive models of \ZFC{} for a minute, the Boolean-valued perspective of $\Kden$ can arguably be used to give a visceral `explanation' of the relevance of the theory of differentially closed fields to meromorphic functions. Although we've phrased $\Kden$ as a ring all along, it's really more naturally thought of as a sheaf. Specifically, $\Kden(U)$ is the differential ring of meromorphic functions $f$ whose domains are dense open subsets of $U$, modulo agreement on dense open subsets (i.e., just take \cref{defn:Kden-ring} and relativize it to the open set $U$). $\Kc$ itself can obviously be thought of as a subsheaf of $\Kden$, making $\Kden$ into a canonical `Boolean-valued differential closure' of the sheaf $\Kc$ of meromorphic functions itself. Part of the reason we can get away with just thinking of $\Kden$ as a ring is that as a sheaf it is flabby (i.e., satisfies that every local section extends to a global section), so no data is lost if we only consider global sections.

$\Kden$ as a sheaf already has a name. It's the $\neg\neg$-sheafification of $\Kc$ (i.e., the sheafification of $\Kc$ with regards to the double negation Grothendieck topology in which we consider a family of open sets $(U_i)_{i \in I}$ to cover an open set $V$ if $\bigcup_{i \in I}U_i$ is dense in $V$). So we could have written $\Kden$ as $\Kc_{\neg\neg}$. This way of thinking about these objects is intimately tied to the sheaf- or topos-theoretic approach to forcing \cite{MacLane1994}. $\Kden$ is an object in $\Sh_{\neg\neg}(\Cb)$, the topos of $\neg\neg$-sheaves on $\Cb$, which is the minimal dense subtopos of $\Sh(\Cb)$, the ordinary topos of sheaves on $\Cb$. $\Sh_{\neg\neg}(\Cb)$ is a relatively easy example of a non-trivial topos with no points (and, relatedly, a non-trivial locale with no points).

The valuation $\nu$ we have defined here corresponds to truth values of statements about $\Kden$ as a differential ring in the (classical) internal logic of $\Sh_{\neg\neg}(\Cb)$. So what we can say is that $\Kden$ is internally a differentially closed field in $\Sh_{\neg\neg}(\Cb)$. Since $\Sh_{\neg\neg}(\Cb)$ corresponds to the Boolean-valued presentation $V^{\RO(\Cb)}$ of Cohen forcing, this fact is the same thing as our main idea: The stalk of the sheaf of meromorphic functions (from our universe) at a Cohen generic complex number is a differentially closed field.

\subsection{The stalk at a point that (doubly) doesn't exist}
\label{sec:doesn't-exist}

While the Boolean-valued or sheaf-theoretic understanding of the relationship between $\Kden$ and forcing is nice, some people\footnote{Including me, on some days.} might find it to be `too enlightened' for their tastes. Actual $\{\text{true},\text{false}\}$-valued models have a certain appealing conceptual simplicity, reminiscent of the bluntness of maximal ideals in rings, and I would argue this is big part of why Boolean-valued models are not popular as a framework for forcing among set theorists.

So how can we rectify these two goals? We want to build a differentially closed field out of the totality of the sheaf of meromorphic functions without compromising by passing to a subsheaf, but we also don't want to achieve our goal only in some generalized sense that requires changing our notion of what the words `field' or `truth' mean.

Putting on our model-theorist hats for a second, we could take another look at the set of `fully true' $\Lc(\Kden)$-sentences in $\Kden$ interpreted as a Boolean-valued structure,
\[
T_{\Kden} \coloneq \{\varphi \in \Lc(\Kden) : \nu(\varphi) = \Cb\}.
\]
We know by \cref{prop:Bool-valued-DCF} that this extends the theory of characteristic $0$ differentially closed fields, \DCF, but we actually have more. Recall the following concept from the typical proof of the completeness theorem for first-order logic.

\begin{defn}
  A first-order theory $T$ is \emph{term-complete} if for any formula $\varphi(x)$, there is a term $t$ such that $(\exists x \varphi(x)) \to \varphi(t)$ is in $T$.
\end{defn}

The way the completeness theorem is typically proven is by first arguing that any consistent theory $T$ can be extended to a consistent term-complete theory $T'$, then arguing that a model of $T$ can be read off from any completion $T''$ of $T'$ (specifically by considering the term structure of $T''$).

\begin{prop}\label[prop]{prop:term-complete}
  $T_{\Kden}$ is term-complete.
\end{prop}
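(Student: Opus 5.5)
We prove term-completeness with the terms being the constants for elements of $\Kden$. Fix an $\Lc(\Kden)$-formula $\varphi(x)$. We want a single $[g] \in \Kden$ with $\nu\bigl((\exists x\varphi(x)) \to \varphi([g])\bigr) = \Cb$. Since $\nu(\psi \to \chi) = \nu(\neg\psi \vee \chi)$, it is enough to show $\nu(\exists x\varphi(x)) \subseteq \nu(\varphi([g]))$. This is the usual \emph{maximum principle} (fullness) for Boolean-valued models, and the key input is that $\Kden$ can glue along antichains, because domains of representatives may be arbitrary dense open sets.

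The steps are as follows. Let $E = \nu(\exists x\varphi(x)) = \tint\bigl(\cl\bigcup_{a\in\Kden}\nu(\varphi(a))\bigr)$. By Zorn's lemma, choose a maximal family $(U_i)_{i\in I}$ of pairwise disjoint non-empty open sets such that each $U_i \subseteq \nu(\varphi(a_i))$ for some $a_i = [f_i] \in \Kden$. Since $\Cb$ is second countable, $I$ is countable, though this is not actually needed. By maximality, $\bigcup_i U_i$ is dense in $E$: any non-empty open $O \subseteq E$ disjoint from it would meet some $\nu(\varphi(a))$, and this intersection could be added to the family. Now glue: define $g$ on the open set $\bigcup_i (U_i\cap\dom f_i) \cup \tint(\Cb\setminus\bigcup_i U_i)$ by $g = f_i$ on $U_i \cap \dom f_i$ and $g = 0$ elsewhere. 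This domain is dense and open, and $g$ is meromorphic because the pieces are disjoint open sets. So $[g]\in\Kden$ and $[g][\onebf_{U_i}] = [f_i][\onebf_{U_i}]$ for each $i$.

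The remaining step, which I expect to be the main obstacle, is a \emph{locality lemma}: if $[g]$ and $[f]$ agree on an open set $U$ (that is, $[g][\onebf_U]=[f][\onebf_U]$), then $\nu(\varphi([g]))\cap U = \nu(\varphi([f]))\cap U$ for every formula $\varphi$. I would prove this by induction on $\varphi$, strengthened to allow several parameters. For atomic formulas, it holds because the value of a term is computed pointwise and derivatives are local. For $\wedge$ and $\neg$, it follows from the facts that $\tint\cl$ commutes with intersecting with an open set $U$ in the sense $\tint\cl(A)\cap U = \tint\cl(A\cap U)\cap U$, and that $U\cap\tint\cl(\Cb\setminus A) = U\cap \tint\cl(U\setminus A)$. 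For quantifiers, the supremum and infimum over all $a\in\Kden$ are the same on both sides once we intersect with $U$, since the substituted parameters range over the same set. One needs care that $\nu$ is really computed in $\RO(\Cb)$, where meets with $U$ distribute over arbitrary joins; that is the standard fact that $\RO(\Cb)$ is a complete Boolean algebra.

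Given locality, we have $U_i \subseteq \nu(\varphi([g]))$ for each $i$. Hence $\bigcup_i U_i \subseteq \nu(\varphi([g]))$. Since $\nu(\varphi([g]))$ is regular open and $\bigcup_i U_i$ is dense in $E$, it follows that $E \subseteq \tint\cl\bigcup_i U_i \subseteq \nu(\varphi([g]))$. This gives $\nu\bigl((\exists x\varphi(x))\to\varphi([g])\bigr)=\Cb$, so this implication lies in $T_{\Kden}$, as required.
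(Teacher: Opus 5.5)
Your proposal is correct and follows essentially the same route as the paper's proof. Both arguments take a disjoint family of open sets, each contained in some $\nu(\varphi([f_i]))$, whose union is dense in $\nu(\exists x\varphi(x))$, then glue the $f_i$ (extended by $0$ off that set) into a single $[g]$, and conclude that $\nu(\varphi([g]))$ contains $\nu(\exists x\varphi(x))$. The one difference is that you spell out the Zorn's lemma choice of the family and the locality lemma, which the paper uses tacitly when it asserts $\nu(\varphi([g])) = \nu(\exists x\varphi(x))$.
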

\begin{proof}
  Fix an $\Lc(\Kden)$-formula $\varphi(x)$. Let $U = \nu(\exists x \varphi(x))$. By definition, we can find a family $(V_i)_{i \in I}$ of pairwise disjoint open subsets of $U$ and elements $([f_i])_{i \in I}$ of $\Kden$ such that $\nu(\varphi([f_i])) \supseteq V_i$ for each $i \in I$ and $\bigcup_{i \in I}V_i$ is dense in $U$. Let
  \[
    g(z) =
    \begin{cases}
      f_i(z) & z \in V_i \cap \dom(f_i) \\
      0 & z \in \tint(\Cb \setminus U)
    \end{cases}.
  \]
  We now have that $\nu(\varphi([g])) = \nu(\exists x \varphi(x))$, whereby $\nu((\exists x \varphi(x)) \to \varphi([g])) = \Cb$.
\end{proof}

In some perhaps unsurprising sense what we're doing here is sheaf-theoretic gluing. The topos $\Sh_{\neg\neg}(\Cb)$ is totally disconnected, so we can patch any family of local solutions to the `problem' of realizing $\varphi(x)$ together to form a global solution. We actually were doing this already back in \cref{sec:main}. The weak inverse $[f]^\dagger$ of a given $[f] \in \Kden$ is such a witness for the sentence $\exists x(x[f] = 1)$.

This now gives us a family of models of \DCF. For any completion $T'$ of $T_{\Kden}$, we can consider the term structure of $T'$: Elements are equivalence classes $[[f]]$ of the equivalence relation $[g]\sim [h]$, which holds if $[g] = [h]$ (or, equivalently, $[g]-[h] = 0$) is a sentence in $T'$. This equivalence relation respects the differential ring operations (because $T'$ extends the theory of differential rings), so we can interpret the differential ring operations on this structure, and by basic reasoning from the proof of the completeness theorem, we have that this thing is a model of $T' \supseteq \DCF$ and therefore a differentially closed field.

But in truth we've seen this object already. Given a completion $T'$ of $T_{\Kden}$, consider the ideal
\[
  \mfr_{T'} = \{[f]\in\Kden : [f] = 0~\text{is in}~T'\}.
\]
The term structure of $T'$ is precisely $\Kden/\mfr_{T'}$. Moreover, every maximal ideal of $\Kden$ is of this form, and a little bit more can be said.

\begin{prop}\label[prop]{prop:one-to-one-to-one}$ $
  There is a canonical one-to-one-to-one correspondence between completions of $T_{\Kden}$, ultrafilters on $\RO(\Cb)$, and maximal ideals in $\Kden$ given by the following:
    \begin{enumerate}
    \item For a completion $T' \supseteq T_{\Kden}$, the set $\{U \in \RO(\Cb) : [\onebf_U] = 1~\textnormal{is in}~T'\}$ is an ultrafilter on $\RO(\Cb)$.
    \item For an ultrafilter $F$ on $\RO(\Cb)$, $\{ 1-[\onebf_U] : U \in F \}$ generates a maximal ideal on $\Kden$.
    \item For a maximal ideal $\mfr$ on $\Kden$, $T_{\Kden} \cup \{[\onebf_U] = 0 : [\onebf_U] \in \mfr\}$ axiomatizes a completion of $T_{\Kden}$.
    \end{enumerate}
\end{prop}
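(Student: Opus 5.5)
The plan is to check that each of the three maps lands where it should, and then that composing them around the triangle returns to the starting object. The basic tool is the valuation $\nu$ and one observation: for any $\Lc(\Kden)$-sentence $\varphi$, the idempotent $e_\varphi := [\onebf_{\nu(\varphi)}]$ internally decides $\varphi$. More precisely, I would first prove
\[ \nu\bigl(\varphi \leftrightarrow e_\varphi = 1\bigr) = \Cb \]
for every $\varphi$. By the definitions, $\nu(e_\varphi = 1) = \tint\cl\,\nu(\varphi) = \nu(\varphi)$, and $\nu(\varphi)$ is already regular open. The general clause $\nu(\psi\leftrightarrow\chi)=\Cb$ iff $\nu(\psi)=\nu(\chi)$ then gives the claim. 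Consequently any completion $T'$ of $T_{\Kden}$ is determined by which idempotent equations $[\onebf_U]=1$ it contains.

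For (1), fix a completion $T'$ and set $F=\{U : ([\onebf_U]=1)\in T'\}$.
\begin{itemize}
\item Upward closure and closure under meets follow from the identities $[\onebf_U][\onebf_V]=[\onebf_{U\cap V}]$, which lie in $T_{\Kden}$ because their $\nu$-value is $\Cb$.
\item $\varnothing\notin F$ because $0\neq 1$ is an axiom.
\item For maximality, $[\onebf_U]=1 \vee [\onebf_U]=0$ has value $\Cb$, and $[\onebf_U]=0$ is equivalent in $T_{\Kden}$ to $[\onebf_{\tint(\Cb\setminus U)}]=1$.
\end{itemize}

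For (2), fix an ultrafilter $F$ and let $I$ be the ideal generated by $\{1-[\onebf_U]:U\in F\}$. It is proper: a finite combination lies in $\langle 1-[\onebf_{U_1\cap\dots\cap U_k}]\rangle$, and $1$ lies there only if $U_1\cap\dots\cap U_k=\varnothing$, contradicting $F$ being a filter. For maximality I would use that ideals are generated by idempotents (\cref{sec:main}). If $[f]\notin I$, put $e=[f][f]^\dagger=[\onebf_V]$. Then $V\notin F$, so $\tint(\Cb\setminus V)\in F$, hence $1-e\in I$ and $1\in I+\langle[f]\rangle$.

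For (3), fix a maximal ideal $\mfr$. Because $\mfr$ is prime, exactly one of $[\onebf_U]$ and $1-[\onebf_U]$ lies in $\mfr$ (this is \cref{lem:basic-stuff}(\ref{either-or})). So $T_{\Kden}\cup\{[\onebf_U]=0:[\onebf_U]\in\mfr\}$ decides every idempotent equation, and by the key observation it decides every sentence. Completeness therefore reduces to consistency.

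Consistency is the step I expect to be the main obstacle. My plan is to show that if $\varphi\in T_{\Kden}$ and $U_1,\dots,U_k$ satisfy $[\onebf_{U_i}]\in\mfr$, then it is not the case that $T_{\Kden}$ proves $\varphi\to\bigvee_i [\onebf_{U_i}]\neq 0$. For this I need that $T_{\Kden}$ is closed under classical consequence with $\nu$ a sound Boolean-valued semantics, which is the standard Boolean-valued soundness theorem. I would then argue through the term model: by \cref{prop:term-complete}, $\Kden/\mfr$ satisfies every $\varphi$ with $\nu(\varphi)\in\{U:1-[\onebf_U]\in\mfr\}$. This is a Łoś-type induction on formulas, where the existential step uses the gluing witness from \cref{prop:term-complete} and the quantifier-free step uses \cref{lem:basic-stuff}. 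So $\Kden/\mfr$ itself is a model, which gives consistency. The same induction identifies the completion with $\mathrm{Th}(\Kden/\mfr)$.

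Finally I would check the round trips. Starting from $\mfr$, the resulting completion yields the filter $\{U:1-[\onebf_U]\in\mfr\}$, and that filter generates $\mfr$ back because $\mfr$ is generated by its idempotents. Starting from $F$, the maximal ideal $I$ satisfies $[\onebf_U]\in I$ if and only if $\tint(\Cb\setminus U)\in F$, so the composite returns $F$. Starting from $T'$, the composite returns $T'$ because sentences are determined by idempotent equations.
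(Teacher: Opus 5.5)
Your argument is correct. It follows the route the paper indicates in the text just before the proposition, which is stated there without a formal proof: term-completeness (\cref{prop:term-complete}) lets a completion be read off as the term structure $\Kden/\mfr$. Your \L{}o\'s-style induction, together with the reduction $\varphi\leftrightarrow[\onebf_{\nu(\varphi)}]=1$, supplies the step the paper only asserts, namely that every maximal ideal arises, i.e.\ that the theory in (3) is consistent and complete.

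There is one slip to fix in (2). With $e=[f][f]^\dagger=[\onebf_V]$, the hypothesis $[f]\notin I$ forces $V\in F$, not $V\notin F$: if $\tint(\Cb\setminus V)\in F$, then $e=1-[\onebf_{\tint(\Cb\setminus V)}]\in I$, so $[f]=[f]e\in I$. It is $V\in F$ that makes $1-e$ a generator of $I$. As written you have $V$ and $\tint(\Cb\setminus V)$ interchanged in that chain, since from $\tint(\Cb\setminus V)\in F$ one gets $e\in I$, not $1-e\in I$. The conclusion $1\in I+\langle[f]\rangle$ is nevertheless right.
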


Moreover, the Stone space of completions of $T'$, the Stone space of $\RO(\Cb)$, and the spectrum of $\Kden$ can be canonically identified as topological spaces.

The relevance of ultrafilters on the Boolean algebra $\RO(\Cb)$ is that the construction we're considering here can also be thought of as an example of the Boolean ultrapower construction introduced by Mansfield in \cite{Mansfield1971}.

So we really have returned to the construction from Sections~\ref{sec:main} and \ref{sec:models}, but what does it have to do with forcing? Looking at \cref{prop:term-complete}, one might be struck by how soft the proof is. Really, we didn't use any properties of $\Kden$ at all. You can see this too in \cref{prop:one-to-one-to-one}. Ultrafilters on $\RO(\Cb)$ don't have anything in particular to do with $\Kden$ or $T_{\Kden}$ or even differentially closed fields at all; rather, they `belong' to the space $\Cb$ itself.

Boolean ultrapowers are central in a third approach to forcing, referred to as the \emph{filter-quotient construction} in topos theory \cite[Sec.~V.9]{MacLane1994} and introduced in set-theoretic language by Hamkins and Seabold as the \emph{naturalist account of forcing} in \cite{HamkinsSeabold2012}. In Boolean-valued forcing, we talk about properties the expanded universe \emph{would} have if it existed, without actually committing to specifics (except perhaps in some `dynamic' sense). In the Boolean ultrapower approach, we need to actually build two universes on top of the one we start with.

The issue is that a Cohen generic complex number is something that needs to `live in the cracks' between the complex numbers we're able to see. But if we're in the `real' mathematical universe, then there are no such cracks, period. The complex numbers, like the reals, are complete. So we first need to build more `ordinary' complex numbers so that there will then be cracks in which Cohen generic complex numbers can live. Unfortunately this muddles our \emph{Truman Show}/allegory of the cave metaphor a bit, but to some extent this is because the semantics of the different approaches to forcing, while closely related, are in fact distinct. The issue is that there are now two distinct notions of `passing to a larger universe,' when before there was only one. (See Figure~\ref{fig:three-approaches}.)

  \begin{figure}
  \centering
  \[\begin{tikzcd}[column sep=small]
	V &&&& {^\star V[a]} \\
	& {M[a]} &&& {^\star V} \\
	M &&&& V \\
	&& {V^{\mathrm{RO}(\mathbb{C})}} & {\mathrm{Sh}_{\neg\neg}(\mathbb{C})} \\
	&& V & {\mathrm{Set}}
	\arrow["{\ni,\supseteq}"{sloped}', from=2-2, to=1-1]
	\arrow["{^\star\subseteq}"{sloped,pos=0.4}, from=2-5, to=1-5]
	\arrow["{\subseteq,\in}"{sloped,pos=0.4}, from=3-1, to=1-1]
	\arrow["\subseteq"'{sloped}, from=3-1, to=2-2]
	\arrow["\preceq"{sloped}, from=3-5, to=2-5]
	\arrow[from=5-3, to=4-3]
	\arrow[from=5-4, to=4-4]
\end{tikzcd}\]
  \caption{Three semantics approaches to forcing. The countable transitive model approach involves two arbitrary choices (the model $M$ and the generic $a$). The Boolean ultrapower approach involves one arbitrary choice (the ultrafilter generating $^\star V[a]$). The Boolean-valued model or sheaf-theoretic approach involves no arbitrary choices, but only generates a model in a generalized sense. The corresponding methods of constructing differentially closed fields---$\Kc^M_a$, $\Kden/\mfr$, and $(\Kden,\nu)$---respectively, involve the same arbitrary choices and produce the same kind of model.}
  \label{fig:three-approaches}
\end{figure}

In the Boolean ultrapower approach to forcing, we build an elementary extension $^\star V$ of our starting universe of sets $V$ in such a way that it admits a forcing extension $^\star V[a]$ generated by a non-standard Cohen generic complex number $a$. $^\star V$ is a quotient of $V^{\mathrm{RO}(\Cb)}$ in the same way that $\Kden/\mfr$ is a quotient of $\Kden$ (quite literally, given \cref{prop:one-to-one-to-one}).

When I say `non-standard,' I mean it in the sense of non-standard analysis. $^\star V$ is the kind of object used in the \emph{non-standard superstructure} approach to non-standard analysis. Traces of this idea are visible in $\Kden/\mfr$, such as in the following proposition.

\begin{prop}\label[prop]{prop:subrings}
	For any ring $R \subseteq \Cb$, let $\underline{R}_{\dense}$ be the subring of $\Kden$ consisting of equivalence classes of locally constant functions taking values in $R$. For any such $R$, the ring $\underline{R}_{\dense}/(\mfr \cap \underline{R}_{\dense})$ has the same first-order theory as $R$.
\end{prop}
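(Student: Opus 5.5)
The plan is to identify $\underline{R}_{\dense}/(\mfr \cap \underline{R}_{\dense})$ with a genuine ultrapower (or reduced power) of $R$, and then apply Łoś's theorem.

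First, describe the ring concretely. An element of $\underline{R}_{\dense}$ is a class $[f]$ with $f$ locally constant on a dense open set and $R$-valued. Its domain splits into countably many connected components $(C_j)$, with $f$ constant on each. Equivalently, $[f]$ is determined by a maximal antichain $(C_j)$ in $\RO(\Cb)$, which is countable by ccc, together with a labelling $j \mapsto r_j \in R$. Two such labelled antichains give the same element iff they agree on a common refinement. Thus $\underline{R}_{\dense}$ is the ring of "$\RO(\Cb)$-valued partitions labelled by $R$", i.e. the Boolean power $R^{(\RO(\Cb))}$ with pointwise operations.

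Next, pass to ultrafilters. By \cref{prop:one-to-one-to-one}, $\mfr$ corresponds to the ultrafilter $F=\{U : 1-[\onebf_U] \in \mfr\}$ on $\RO(\Cb)$. Using \cref{lem:basic-stuff}(\ref{zero-char}), $[f]\in\mfr$ iff $\{z : f(z)\neq 0\}$ has regular-open hull outside $F$. Equivalently, $[f]\equiv[g]$ modulo $\mfr \cap \underline{R}_{\dense}$ iff $\tint\cl\{z : f(z)=g(z)\}\in F$. So the quotient is exactly the Boolean ultrapower of $R$ by $F$ in Mansfield's sense.

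The main step is a Łoś theorem for this Boolean ultrapower. For any first-order formula $\varphi(\bar x)$ in the ring language and $[f_1],\dots,[f_k]$, define
\[
\|\varphi([\bar f])\| = \tint\cl\{z : R \models \varphi(f_1(z),\dots,f_k(z))\}.
\]
I will show by induction on $\varphi$ that the quotient satisfies $\varphi$ at the images iff $\|\varphi\|\in F$.
\begin{itemize}
\item The atomic case is the computation above.
\item The Boolean connectives use that $F$ is an ultrafilter on $\RO(\Cb)$, together with $\|\neg\varphi\| = \tint(\Cb\setminus\|\varphi\|)$. The latter holds because the set where $\varphi$ holds is open (a union of components), so it and its complement's interior are complementary in $\RO$.
\item The existential step is the only real obstacle, and it is where gluing enters. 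If $\|\exists y\,\varphi\|\in F$, choose for each component $C$ of the common refinement of the domains on which $R\models\exists y\,\varphi$ a witness $r_C\in R$ (using choice). Set $g=r_C$ on $C$ and $g=0$ on the interior of the complement. Then $g\in\underline{R}_{\dense}$ and $\|\varphi([\bar f],[g])\|=\|\exists y\,\varphi\|\in F$. This mirrors the proof of \cref{prop:term-complete}.
\end{itemize}
Finally, for a sentence $\varphi$, $\|\varphi\|$ is either $\Cb$ or $\varnothing$ according to whether $R\models\varphi$. Since $\Cb\in F$ and $\varnothing\notin F$, the quotient satisfies exactly the sentences true in $R$, which proves the proposition.
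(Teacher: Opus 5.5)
Your proposal is correct and is essentially the argument the paper has in mind. The paper gives no detailed proof, only remarking that this is a generalized (Mansfield-style Boolean) ultrapower construction; your identification of the quotient with the Boolean ultrapower of $R$ by the ultrafilter on $\RO(\Cb)$, followed by a Łoś induction whose existential step glues witnesses as in \cref{prop:term-complete}, is exactly how to make that remark precise. You left implicit only the easy converse of the existential step: a witness $[g]$ gives $\|\varphi([\bar f],[g])\| \subseteq \|\exists y\,\varphi([\bar f])\|$, and $F$ is upward closed.
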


So, for example, in $\underline{\Rb}_{\dense}/(\mfr \cap \underline{\Rb}_{\dense})$, if $x\neq0$, then exactly one of $x$ and $-x$ has a square root and in $\underline{\Zb}_{\dense}/(\mfr \cap \underline{\Zb}_{\dense})$, if $x\neq 0$, then exactly one of $x$ and $-x$ is a sum of four squares. This is entirely unsurprising model-theoretically, given that this is a kind of generalized ultrapower construction, but it would be at least a little bit surprising if we were just considering the definition of $\Kden$ given in \cref{sec:main}.

So now we can finally summarize what $\Kden/\mfr$ is from the point of view of forcing. $\Kden/\mfr$ is the result of passing to a non-standard extension of the universe, $^\star V$, forcing to build a (non-standard) complex number $a$ Cohen generic over $^\star V$, and looking at the stalk $\Kc^{^\star V}_a$ of $^\star V$'s sheaf $\Kc^{^\star V}$ of (non-standard) meromorphic function at $a$. In this way, $\Kden/\mfr$ is the same kind of object as the stalk $\Kc^M_a$ from \cref{sec:re-reproving}.

\section{Generalizations and questions}
\label{sec:generalizations-and-questions}

One of the primary dividends of a simple semantic construction is that it is easy to generalize. What we have discussed in this paper is three perspectives of building semantics for the `(Cohen) generic first-order behavior' of a sheaf of differential rings, but we've focused on one particular sheaf of differential rings, $\Kc$, and there are many others. I'll refer to the theory produced by this common construction as the \emph{Cohen generic theory} of the sheaf in question. (As we'll see in a moment, this theory does not depend on the choice of Cohen generic point.)

For example, in \cite{Singer1978}, Singer studied the model companion, \CODF, of the theory of ordered differential fields. In Section~3 of \cite{Singer1978}, he showed that every finitely generated real\footnote{A field is \emph{(formally) real} if $-1$ is not a sum of squares.} differential field is isomorphic to a field of real-valued meromorphic functions on some open subset of $\Rb$. The proof is very similar to that of Seidenberg's embedding theorem, but Singer also notes at the end of Section~3 that the precise analog of \cref{lem:Seidenberg-lemma} fails, so we're left with the question of what precisely happens if we repeat the construction presented in this paper with the sheaf $\Rc$ of real-valued meromorphic functions on $\Rb$.

  Let $\Rden$ be the ring of real-valued meromorphic functions on $\Rb$ with dense open domain. One can show using the methods of this paper that for any maximal ideal $\mfr$ of $\Rden$, $\Rden/\mfr$ is a differential field whose field reduct is real-closed (and so has a unique compatible ordering, which is also first-order definable).

\begin{quest}
  Is $\Rden/\mfr$ a model of \CODF? Is it $\mathrm{NIP}$?
\end{quest}

It's not prima facie clear that the first-order theory of $\Rden/\mfr$ doesn't depend on the choice of $\mfr$. One can actually show this using the machinery of forcing fairly directly (using the idea of \emph{homogeneity} of forcing posets), but this goes a little bit beyond the aspects of forcing we've exposited in this paper.

\begin{prop}\label[prop]{prop:Rden-theory}
  The theory of $\Rden/\mfr$ does not depend on the choice of maximal ideal $\mfr$.
\end{prop}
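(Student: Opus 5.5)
We argue through the Boolean-valued valuation, exactly as for $\Kden$. Define $\nu_\Rb$ on $\Lc(\Rden)$-sentences by the same clauses as $\nu$, with $\RO(\Rb)$ in place of $\RO(\Cb)$. The analogues of \cref{prop:term-complete} and \cref{prop:one-to-one-to-one} go through verbatim, since their proofs only used gluing over pairwise disjoint open sets and the fact that idempotents are $[\onebf_U]$ for $U \in \RO(\Rb)$. In particular, for a maximal ideal $\mfr$ with associated ultrafilter $F_\mfr = \{U : 1-[\onebf_U]\in\mfr\}$, we get a Łoś-type statement: for every $\Lc(\Rden)$-sentence $\varphi$, $\Rden/\mfr \models \varphi$ if and only if $\nu_\Rb(\varphi) \in F_\mfr$. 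We prove this by induction on $\varphi$. The atomic case is \cref{lem:basic-stuff}, the connectives use that $F_\mfr$ is an ultrafilter on $\RO(\Rb)$, and the existential step uses term-completeness.

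Restricting to sentences $\varphi$ of the pure language of differential rings (no parameters), it therefore suffices to show that $\nu_\Rb(\varphi) \in \{\varnothing, \Rb\}$ for every such $\varphi$. Then $\Rden/\mfr \models \varphi$ if and only if $\nu_\Rb(\varphi)=\Rb$, independently of $\mfr$. This is the homogeneity step. Translations $\tau_c(z)=z+c$, and more generally real-analytic diffeomorphisms $\sigma$ of open sets with $\sigma' \neq 0$, act on $\Rden$. The subtlety is that $D$ does not commute with $\sigma$: we have $D(f\circ\sigma) = (Df\circ\sigma)\,\sigma'$. So we use only translations (and their piecewise gluings), for which $D$ is preserved exactly. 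A sentence without parameters then satisfies $\nu_\Rb(\varphi)+c = \nu_\Rb(\varphi)$ for all $c\in\Rb$, because $f \mapsto [f\circ\tau_{-c}]$ is a differential ring automorphism of $\Rden$ that induces translation on $\RO(\Rb)$ and respects all clauses of $\nu_\Rb$.

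It remains to show that a translation-invariant regular open set $U\subseteq \Rb$ is $\varnothing$ or $\Rb$. If $U$ is nonempty, it contains an interval $(a,b)$, and its translates by all $c$ cover $\Rb$. Hence $U=\Rb$.

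The main obstacle I expect is making the Łoś induction fully rigorous for $\Rden$. In particular, the existential clause requires that $\nu_\Rb(\exists x\varphi)$ be witnessed by a single glued element. Here we must check that gluing countably or uncountably many disjoint local witnesses $f_i$ on $V_i$ yields a meromorphic function on a dense open set. This holds because the $V_i$ are disjoint and open, and every disjoint family of open sets in $\Rb$ is countable. The atomic case also needs $\nu_\Rb(a=b)\in F_\mfr \iff a-b\in\mfr$, which follows from \cref{lem:basic-stuff} adapted to $\Rb$.
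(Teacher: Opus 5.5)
Your proposal is correct and follows the paper's proof. Translations act on $\Rden$ compatibly with the $\RO(\Rb)$-valued valuation $\nu$, so a parameter-free sentence has translation-invariant, hence trivial ($\varnothing$ or $\Rb$), truth value, and each $\Rden/\mfr$ satisfies exactly the sentences with value $\Rb$; the only difference is that you spell out the {\L}o\'{s}-type transfer ($\Rden/\mfr \models \varphi$ iff $\nu(\varphi) \in F_\mfr$, with term-completeness handling the existential step), which the paper compresses into the assertion that every $\Rden/\mfr$ ``must be a model of this theory.''
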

\begin{proof}
  In the same way as \cref{sec:Boolean-valued}, define a language $\Lc(\Rden)$ extending the language of differential rings and define an $\RO(\Rb)$-valued truth valuation $\nu$ on $\Rden$ as an $\Lc(\Rden)$-structure. Any translation $\sigma : \Rb \to \Rb$ has a natural action on $\Rden$ by pre-composition, which extends to actions on $\RO(\Rb)$ and on $\Lc(\Rden)$. These actions are compatible in the sense that $\nu(\sigma \cdot \varphi) = \sigma \cdot \nu(\varphi)$ for any $\Lc(\Rden)$-sentence $\varphi$. Sentences in the language of differential rings are invariant under this action, so for any such sentence $\varphi$, we must have that $\nu(\varphi)$ is invariant under translations. The only elements of $\RO(\Rb)$ that are invariant under translations are $\varnothing$ and $\Rb$. Therefore the reduct of the theory $\{\varphi \in \Lc(\Rden) : \nu(\varphi) = \Rb\}$ to the language of differential rings is complete. Every structure of the form $\Rden/\mfr$ must be a model of this theory, so they are all elementarily equivalent.
\end{proof}

Interesting things happen semantically with the other constructions we've discussed here too. Given a countable transitive model $M$, let $\Rc^M$ be the subsheaf of $\Rc$ consisting of functions coded in $M$ (in the same manner as \cref{defn:M-meromorphic-sheaf}). For any real number $a$ Cohen generic over $M$, we know that the stalk $\Rc^M_a$ must be a model of the theory in \cref{prop:Rden-theory}, but we also know that it can't be a saturated model (unlike $\Kc^M_a$). This is because the field of constants in $\Rc^M_a$ is canonically a subfield of $\Rb$ and so in particular is Archimedean (in the ordered field sense). $\Rden/\mfr$ might be saturated by virtue of the fact that it is built using an ultrapower-like construction, but it's not clear whether this is true.

We can play a similar game with the sheaf of meromorphic functions on $\Cb^m$. The generic stalk of this is naturally a field with $m$ commuting derivations. There is a model companion of this theory, described by McGrail as $m$-\DCF{} in \cite{McGrail2000}. It seems likely that this builds a model of that model companion, but I'm fearful enough of functions of several complex variables to be wary of conjecturing this in print.

\begin{quest}
 Is $m$-\DCF{} the Cohen generic theory of the sheaf of meromorphic functions on $\Cb^m$?
\end{quest}

We can also consider other similar constructions, varying the sheaf or the field. For instance, we could think about the sheaf of smooth functions on $\Rb$, or meromorphic functions on the $p$-adics or on $\Fb_p((t))$ or on their algebraic closures.

\begin{quest}
  Is the Cohen generic theory of the sheaf of smooth functions on $\Rb$ the same as the Cohen generic theory of the sheaf of meromorphic functions on $\Rb$?
\end{quest}

\begin{quest}
  What can be said about the Cohen generic theories of stalks of sheaves of meromorphic functions on $\Qb_p$? On $\Cb_p$?
\end{quest}

In the case of $\Qb_p$ in particular, we know by the same general reasoning we used in \cref{sec:forcing} that whatever the resulting differential field is, its field of constants will be an elementary extension of $\Qb_p$. Like with $\Rb$, the topology on $\Qb_p$ is first-order definable, so we know that the resulting theory will still be strongly connected to the theory of $\Qb_p$.

\begin{quest}
  What is the relationship between the Cohen generic theory of the sheaf of meromorphic functions on $\overline{\Fb_p((t))}$ and $\mathsf{DCF}_p$, the theory of differentially closed fields of characteristic $p$?
\end{quest}

We can also ask what happens with notions of forcing that build reals other than Cohen forcing. For instance, random real forcing concerns itself with full measure sets rather than comeager sets. It seems less likely that this would be interesting in the context of meromorphic functions, but it might be interesting for smooth functions. In fact, the natural object in this context would arguably be the following:

\begin{defn}
 Fix a filter $F$ of subsets of $\Rb$ (or some other field on which derivatives make sense). A partial function $f : {\subseteq}\Rb \to \Rb$ is \emph{$F$-smooth} if for each $n$, the domain of the derivative $f^{(n)}$ is an element of $F$ (possibly with $\dom(f^{(n+1)})$ a strict subset of $\dom(f^{(n)})$).
\end{defn}

We can now repeat the entire story considering the ring of $F$-smooth functions modulo agreement on elements of $F$. Something along these lines (with $F$ the filter of full measure subsets of $\Rb$, for instance) might allow one to apply ideas from the model theory of differential fields to weak solutions of partial differential equations.

We also get additional structure on all of these objects, beyond just the structure of a ring with one or more derivations. Every entire holomorphic function $f : \Cb^n \to \Cb$ has a natural action on $\Kden/\mfr$. These can be interpreted pointwise on $\Kden$ and the quotient $\Kden/\mfr$ respects these functions. Moreover, the derivative interacts correctly with all of these, obeying the chain rule. For example, it makes sense to interpret the function $z \mapsto e^z$ on $\Kden/\mfr$ and it will satisfy $De^f = f'e^f$, as expected. (These are all facts which are not too hard to verify by hand but which are completely trivial thinking of $\Kden/\mfr$ in terms of forcing; in other words, it's obvious that we can compose elements of $\Kden/\mfr$ with holomorphic functions in a coherent way because elements of $\Kden/\mfr$ are morally just stalks of meromorphic functions at a point.)

Algebraically, this makes $\Kden/\mfr$ into something like an analytic version of a $C^\infty$-ring \cite{Moerdijk1991}. Model-theoretically, this ends up interpreting corresponding expansion of (an elementary extension of) $\Cb$, and these tend to not be terribly tame by themselves,\footnote{$(\Cb,+,\cdot,e^z)$ defines $\Zb \subseteq \Cb$, for instance. And similarly, $\Kden/\mfr$ expanded by the natural interpretation of $e^z$ defines the non-standard integers $^\star \Zb \subseteq {{}^\star \Cb}$.} but we can apply similar ideas to the other objects we've been discussing here.

It's known that the structure $\Rb_{\mathrm{an}}$ (i.e., $\Rb$ expanded with all real-analytic functions on unit cubes) is $o$-minimal. Given the germy nature of the construction we're doing, it makes sense to talk about real-analytic functions restricted to cubes on $\Rden/\mfr$. Given any $[g_1],\dots,[g_n] \in \Rden$ and any function $f : \Rb^n \to \Rb$ definable in $\Rb_{\mathrm{an}}$, the function $h(z) = f(g_1(z),\dots,g_n(z))$, while not meromorphic on all of its domain, is still meromorphic on a dense open set, and so can be interpreted as an element of $\Rden$. Again, the equivalence relation of agreeing on a dense open set respects this operation, as do all maximal ideals $\mfr$, so we can consider $\Rden/\mfr$ as a structure in the union of the language of differential rings and the language of $\Rb_{\mathrm{an}}$.

\begin{quest}
  Is the theory of $\Rden/\mfr$ expanded by the language of $\Rb_{\mathrm{an}}$ model-theoretically tame? Is it $\mathrm{NIP}$?
\end{quest}

\appendix

\section{$\Kden/\mfr$ is saturated}
\label{ap:saturated}

First, an easy observation.

\begin{prop}
  $|\Kden/\mfr| = 2^{\aleph_0}$.
\end{prop}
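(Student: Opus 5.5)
We bound $|\Kden/\mfr|$ from above and from below separately.

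For the upper bound, it suffices to show $|\Kden| \leq 2^{\aleph_0}$, since $\Kden/\mfr$ is a quotient of $\Kden$. Every element $[f]$ has a representative $f$ on a dense open set $U$. There are only $2^{\aleph_0}$ open subsets of $\Cb$, because each is a union of basic open balls with rational centers and radii. A meromorphic function on $U$ is continuous as a map into the Riemann sphere, so it is determined by its values on the countable dense set $U \cap (\Qb + i\Qb)$. Hence there are at most $(2^{\aleph_0})^{\aleph_0} = 2^{\aleph_0}$ meromorphic functions on any fixed $U$, and $|\Kden| \leq 2^{\aleph_0}\cdot 2^{\aleph_0} = 2^{\aleph_0}$.

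For the lower bound, I would find $2^{\aleph_0}$ elements of $\Kden$ whose pairwise differences all lie outside $\mfr$. The constants $[c]$ for $c \in \Cb$ do this directly. For $c \neq d$, the difference $[c]-[d] = [c-d]$ is a unit in $\Kden$ with inverse $[1/(c-d)]$. A maximal ideal is proper and therefore contains no units, so $[[c]] \neq [[d]]$ in $\Kden/\mfr$. Thus $c \mapsto [[c]]$ embeds $\Cb$ into $\Kden/\mfr$, which gives $|\Kden/\mfr| \geq 2^{\aleph_0}$.

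The main obstacle, to the extent there is one, is the upper bound: one has to note that a meromorphic function on a possibly disconnected dense open domain is still determined by countably many values. Continuity on the domain together with density of the rational points handles this, and the rest is cardinal arithmetic.
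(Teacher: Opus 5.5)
Your proof is correct and matches the paper's argument. For the lower bound you embed $\Cb$ as constants, justifying injectivity via units where the paper simply calls it an injective ring homomorphism. For the upper bound you use $|\Kden| \leq 2^{\aleph_0}$, obtained by counting continuous functions on dense open sets, which you spell out in more detail than the paper does.
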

\begin{proof}
  The map taking a complex number $a$ to the constant function with value $a$ gives an injective ring homomorphism of $\Cb$ into $\Kden/\mfr$, so $|\Kden/\mfr| \geq 2^{\aleph_0}$. But $|\Kden| = 2^{\aleph_0}$, since the set of continuous functions on dense open subsets of $\Cb$ also has the cardinality of the continuum, implying that $|\Kden/\mfr| \leq 2^{\aleph_0}$ and so $|\Kden/\mfr| = 2^{\aleph_0}$.
\end{proof}

One thing to note is that while clearly the canonical embedding of $\Cb$ into $\Kden/\mfr$ is an embedding into the constants of $\Kden/\mfr$, not every constant is in the range of this embedding. (In the notation of \cref{prop:subrings}, the constants in $\Kden/\mfr$ correspond to the image of $\underline{\Cb}_{\dense} \subseteq \Kden$ under the quotient map.)

The argument to establish saturation is very similar in general structure to the argument that $\Kden/\mfr$ is a model of \DCF.\footnote{This is not really surprising, given that existential closure is itself a weak finitary kind of saturation.} First, we need an improvement of \cref{fact:indep-solution} part (\ref{solution}) that provides a perfect set of mutually generic solutions to the given differential polynomial.

\begin{fact}\label[fact]{fact:perf-soln}
  Let $U$, $p(x)$, $K \subseteq \Kc(U)$, $V \subseteq U$, $W \subseteq \Cb^n$, and the assignment of meromorphic functions $\vec{a} \mapsto f_{\vec{a}} : W \to \Kc(V)$ be as in \cref{fact:indep-solution} part (\ref{solution}). There is a non-empty perfect set $D \subseteq W$ such that for any distinct $\vec{a},\vec{b}_1,\dots,\vec{b}_m \in D$, $f_{\vec{a}}$ satisfies no non-zero differential polynomial of order less than $n$ with coefficients algebraic over the differential field generated by $\{h|_V : h \in K\} \cup \{f_{\vec{b}_1},\dots,f_{\vec{b}_m}\}$.
\end{fact}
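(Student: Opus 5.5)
We will build $D$ as the branches of a Cantor scheme, using the fact that the set of "bad" tuples is meager. Let $k_0 = \{h(z_0) : h \in K\}$, a countable subfield of $\Cb$. For a finite tuple of points $\vec{a},\vec{b}_1,\dots,\vec{b}_m \in W$, the plan is to reduce the required condition to a pointwise algebraic-independence condition at $z_0$, as in \cref{fact:indep-solution} part (\ref{alg-indep}). Concretely, we aim to show the following: if the $n(m+1)$ numbers consisting of the coordinates of $\vec{a},\vec{b}_1,\dots,\vec{b}_m$ are algebraically independent over $k_0$, then $f_{\vec{a}}$ satisfies no non-zero differential polynomial of order $<n$ over the algebraic closure of $K\langle f_{\vec{b}_1},\dots,f_{\vec{b}_m}\rangle$.

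\emph{Step 1 (the evaluation argument).} Shrinking $V$ if needed, we may assume $z_0$ is not a pole of any element of $K$, and $p$ is in normal form near $z_0$. Then every element of the differential field $K\langle f_{\vec{b}_1},\dots,f_{\vec{b}_m}\rangle$ that is regular at $z_0$ has value at $z_0$ lying in $k_0(\vec{b}_1,\dots,\vec{b}_m)$. Here we use that $f^{(j)}_{\vec{b}}$ for $j \geq n$ is expressed, via $p(f_{\vec b})=0$ solved for the top derivative, rationally in lower derivatives and $K$.

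The subtle case is an element which is a pole or indeterminate at $z_0$. This is handled exactly as in the proof of \cref{fact:indep-solution} part (\ref{alg-indep}). Suppose $g(f_{\vec{a}})=0$ with $g$ of order $<n$ and non-zero, with coefficients algebraic over that field. Take the norm down to $K\langle f_{\vec{b}_i}\rangle$ and clear denominators so that $g$ has coefficients in $K\{f_{\vec{b}_i}\}$ reduced modulo $p$. The coefficients are then polynomials over $K$ in $f^{(j)}_{\vec b_i}$ with $j<n$. Since the $f_{\vec b_i}^{(j)}$ for $j<n$ are algebraically independent over $K$ (by part (\ref{alg-indep}) and genericity), some coefficient is nonzero as a polynomial. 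Since the $\vec b_i$ coordinates are independent over $k_0$, that coefficient remains nonzero after evaluation at $z_0$. Evaluating at $z_0$ then yields a non-trivial algebraic relation among the coordinates of $\vec{a}$ over $k_0(\vec{b}_1,\dots,\vec{b}_m)$, which is a contradiction. I expect this bookkeeping to be the main obstacle: the evaluation needs to be well defined, one must pass from "algebraic over" to polynomial coefficients, and one must ensure the specialization does not kill the polynomial. The first thing I would try is to work with $K[f_{\vec b}^{(j)} : j<n]$ localized at polynomials nonvanishing at $z_0$, and to use a leading-coefficient argument.

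\emph{Step 2 (perfect set).} For each $N$, the set of $N$-tuples in $W^N$ that are algebraically dependent over the countable field $k_0$ is a countable union of proper Zariski-closed sets, hence meager (and closed nowhere dense pieces). By Mycielski's theorem, there is a perfect set $D\subseteq W$ such that every tuple of distinct elements of $D$ is algebraically independent over $k_0$, coordinatewise. Alternatively, this can be shown directly by building a Cantor scheme of shrinking balls: at stage $s$, the $2^s$ balls are chosen pairwise disjoint and avoiding the first $s$ nowhere dense sets in all combinations. Combined with Step 1, this $D$ satisfies the statement.
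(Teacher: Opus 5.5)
Your strategy works, and it takes a different, more elementary route than the paper. The paper only sketches a forcing proof: put the relevant data into a countable transitive model $M$ and take a perfect set of mutually Cohen generic points over $M$. Implicitly, each $\vec a$ is then Cohen generic over $M[\vec b_1,\dots,\vec b_m]$. It therefore avoids every nowhere dense set coded there, including the set of parameters whose solution satisfies a fixed lower-order equation over the algebraic closure of $K\langle f_{\vec b_1},\dots,f_{\vec b_m}\rangle$.

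You ``de-force'' this. Your Step~1, a multi-solution version of \cref{fact:indep-solution} part (\ref{alg-indep}), isolates the countably many nowhere dense sets that matter: zero sets of nonzero polynomials over the countable field $k_0$ in the $n(m+1)$ initial values at $z_0$. Mycielski's theorem then plays the role of the perfect set of mutual generics. Your version needs no models of set theory or absoluteness checks, and it gives a concrete criterion for membership in the perfect set. The paper's is a one-liner once forcing is available, but it leaves implicit exactly the nowhere-density claim that your Step~1 proves. Step~2 is fine as written.

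The bookkeeping in Step~1, however, is wrong as stated whenever $p$ is not linear in $x^{(n)}$. Values at $z_0$ of elements of $L=K\langle f_{\vec b_1},\dots,f_{\vec b_m}\rangle$ need not lie in $k_0(\vec b_1,\dots,\vec b_m)$: for $p(x)=(x')^2-x$ one gets $f_b'(z_0)=\pm\sqrt{b}$. For the same reason, reducing modulo $p$ does not leave coefficients that are polynomials in the $f^{(j)}_{\vec b_i}$ with $j<n$. Powers of $f^{(n)}_{\vec b_i}$ survive, and higher derivatives introduce separant denominators.

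The fix is to descend to $K_0=K(f^{(j)}_{\vec b_i} : i\le m,\ j<n)$ rather than to $L$; then no element of $L$ ever needs to be evaluated at $z_0$.
\begin{itemize}
\item By \cref{fact:indep-solution} part (\ref{alg-indep}), the leading coefficient of $p$ in $x^{(n)}$ does not vanish at $f_{\vec b_i}$. So $f^{(n)}_{\vec b_i}$ is algebraic over $K_0$.
\item Differentiating its minimal polynomial (characteristic $0$) shows that $K(f_{\vec b_i},\dots,f^{(n)}_{\vec b_i})$ is closed under differentiation. Hence $L$ is algebraic over $K_0$.
\item A dependence of $f_{\vec a},\dots,f^{(n-1)}_{\vec a}$ over the algebraic closure of $L$ is therefore a dependence over $K_0$. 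This gives a single nonzero $Q$ with coefficients in $K$ vanishing on the $n(m+1)$ functions $f^{(j)}_{\vec c}$, for $\vec c\in\{\vec a,\vec b_1,\dots,\vec b_m\}$ and $j<n$.
\item All of these functions are holomorphic at $z_0$, and evaluation at $z_0$ is injective on $K$. So evaluating at $z_0$ yields a nonzero polynomial over $k_0$ vanishing at the coordinates, contradicting the independence guaranteed by Step~2.
\end{itemize}
The preliminary shrinking of $V$ is also unnecessary, since $z_0$ is generic by hypothesis. With this repair the proof is complete.
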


Similarly, and more easily, we have the following.

\begin{fact}\label[fact]{fact:perf-soln-trans}
  Let $U$, $p(x)$, $K\subseteq \Kc(U)$, and $z_0$ be as in \cref{fact:indep-solution} part (\ref{generic-point}). For any $\vec{a} \in [0,1]^\omega$, let $g_{\vec{a}}$ be the unique holomorphic function on $U$ with Taylor series
  \(
  \sum_{n = 0}^\infty \frac{a_n}{n!}(z-z_0)^n.
  \)
  There is a non-empty perfect set $D \subseteq [0,1]^\omega$ such that for any distinct $\vec{a},\vec{b}_1,\dots,\vec{b}_m \in D$, $g_{\vec{a}}$ satisfies no non-zero differential polynomial with coefficients algebraic over the differential field generated by $\{h|_U : h \in K\} \cup \{g_{\vec{b}_1},\dots,g_{\vec{b}_m}\}$.
\end{fact}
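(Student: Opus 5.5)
We need to prove Fact A.3 (\cref{fact:perf-soln-trans}): a perfect set $D\subseteq[0,1]^\omega$ such that each $g_{\vec a}$ is differentially transcendental over the differential field generated by $K|_U$ together with finitely many other $g_{\vec b}$, $\vec b\in D$ distinct from $\vec a$, and over its algebraic closure.

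The plan is a Mycielski-style argument: prove that a certain relation is meager and extract a perfect set avoiding it. First, I reduce to statements about Taylor coefficients at $z_0$. Since $z_0$ is not a pole or zero of any non-zero element of $K$, every $h\in K$ is holomorphic at $z_0$. Let $k\subseteq\Cb$ be the countable field generated by all Taylor coefficients at $z_0$ of elements of $K$. A differential polynomial $P$ over the field generated by $K$ and $g_{\vec b_1},\dots,g_{\vec b_m}$ can be cleared of denominators to have coefficients in $K\{g_{\vec b_1},\dots,g_{\vec b_m}\}$. Algebraic coefficients reduce to this case by the usual norm argument: if $P(g_{\vec a})=0$ with $P\neq0$ having coefficients algebraic over a differential field $F$, then $g_{\vec a}$ satisfies a non-zero differential polynomial over $F$ itself, by taking the product of conjugates, or equivalently because differential transcendence over $F$ implies it over $F^{\mathrm{alg}}$.

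Write $\Phi=P(g_{\vec a})$ with $P$ a non-zero polynomial over $K$ in the variables $x^{(i)},y_j^{(i)}$. Then $\Phi$ is holomorphic near $z_0$, and $\Phi\equiv0$ on connected $U$ if and only if all its Taylor coefficients at $z_0$ vanish. The $N$-th Taylor coefficient of $\Phi$ is a polynomial, with coefficients in $k$, in finitely many of the coordinates $a_n$ and $b_{j,n}$. The key condition to secure is the following: if the family $\{a_n\}\cup\{b_{j,n}\}$ is algebraically independent over $k$, then every $P\neq0$ gives $\Phi\neq0$. I would verify this by choosing an $N$ beyond all orders involved and checking that the resulting coefficient polynomial is non-zero. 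This is a leading-term argument: $g_{\vec a}^{(i)}$ contributes $a_{i+N'}(z-z_0)^{N'}/N'!$, so the top-degree coefficient in a suitably large $a_n$ recovers a nonzero piece of $P$. This step is the main obstacle. I expect it to be handled by treating $(a_n)$ as independent indeterminates and observing that the map $P\mapsto(\text{Taylor coefficients of }\Phi)$ is injective on formal indeterminates, because the jets $(a_{i+N})_i$ for varying $N$ are themselves algebraically independent.

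For the perfect set, consider for each $m$ the set $R_m\subseteq([0,1]^\omega)^{m+1}$ of tuples whose combined coordinates are algebraically dependent over $k$. Each such dependence is a single non-zero polynomial over the countable $k$ in finitely many coordinates, so its zero set is closed and nowhere dense. Hence $R_m$ is a countable union of closed nowhere dense sets, i.e. meager. Mycielski's theorem then gives a perfect $D\subseteq[0,1]^\omega$ such that every tuple of distinct elements of $D$ lies outside every $R_m$. Alternatively, one can build a Cantor scheme directly, handling one polynomial and one finite set of nodes at each stage. For distinct $\vec a,\vec b_1,\dots,\vec b_m\in D$ the joint coordinates are then algebraically independent over $k$, and the previous paragraph yields the conclusion.
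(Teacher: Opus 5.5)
Your strategy is correct and is, in substance, the paper's. The paper only sketches the proof: take a countable transitive model containing the relevant data and a perfect set of mutually Cohen generic reals over it. Applying Mycielski's theorem to the countably many closed nowhere dense sets cut out by nonzero polynomials over the countable field $k$ is exactly the Baire-category unpacking of that. Your version needs no transitive models and makes explicit which meager sets must be avoided. The forcing version hides this in the choice of $M$, but it still needs the same analytic input at the end: algebraic independence of the Taylor data forces $\Phi\neq 0$.

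You leave that step open, and your sketch for it is not yet an argument. The $N$-th Taylor coefficient of $\Phi$ is not $P$ evaluated at a shifted jet, because the Leibniz rule mixes in lower jets and derivatives of the coefficients of $P$. So ``the shifted jets are algebraically independent'' does not by itself give injectivity, and the separant-based leading-term analysis would still have to be carried out.

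The step is in fact immediate with $N=0$. It uses the hypothesis you recorded but did not use: $z_0$ is not a \emph{zero} of any nonzero element of $K$. Write $P=\sum_\mu c_\mu M_\mu$ with nonzero $c_\mu\in K$ and distinct monomials $M_\mu$ in the $x^{(i)},y_j^{(i)}$ with $i\le r$, and put $P^{z_0}=\sum_\mu c_\mu(z_0)M_\mu$. Since $h\mapsto h(z_0)$ is injective on $K$, $P^{z_0}$ is a nonzero polynomial over $k_0=\{h(z_0):h\in K\}\subseteq k$. Since $g_{\vec a}^{(i)}(z_0)=a_i$ and $g_{\vec b_j}^{(i)}(z_0)=b_{j,i}$,
\[
\Phi(z_0)=P^{z_0}(a_0,\dots,a_r,b_{1,0},\dots,b_{m,r})\neq 0
\]
by algebraic independence over $k$. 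This is the mechanism of \cref{fact:indep-solution} part (\ref{alg-indep}). Without the `not a zero' hypothesis, you would take $N$ to be the minimal vanishing order at $z_0$ of the $c_\mu$, not a large $N$. Factoring out $(z-z_0)^N$ then reduces to the same computation with coefficients in your larger field $k$.

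With this in place, the rest of your argument goes through as written. The only other point worth a word is that a nonzero polynomial with complex coefficients cannot vanish on a nonempty open subset of $\Rb^s$. That is what makes the sets composing $R_m$ nowhere dense in $([0,1]^\omega)^{m+1}$.
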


One way to prove Facts~\ref{fact:perf-soln} and \ref{fact:perf-soln-trans} is by building an appropriate countable transitive model of \ZFC{} and then using the standard fact that one can build a perfect set of mutually Cohen generic reals over any countable transitive model of \ZFC{}.

Now we analogously upgrade \cref{lem:Seidenberg-dense}.

\begin{lem}\label[lem]{lem:perf-soln-sat}
  Let $U$ be a non-empty open subset of $\Cb$ and let $H$ be a countable set of meromorphic functions on $U$.
  \begin{enumerate}
    \begin{sloppypar}
  \item\label{soln-non-trans} Let $h_1,\dots,h_k \in H$ be nowhere identically vanishing meromorphic functions on $U$, and let $p(x;y_1,\dots,y_k)$ be a non-zero generic differential polynomial with order $n \geq 1$. Then there exists an open $W \subseteq U$ dense in $U$ and a family $(f_\alpha)_{\alpha \in 2^\omega}$ of meromorphic functions on $W$ such that
  \end{sloppypar}
    \begin{itemize}
    \item $p(f_\alpha;h_1|_W,\dots,h_k|_W)=0$ for each $\alpha \in 2^\omega$ and
    \item for any connected component $V$ of $W$, and any distinct $\alpha,\beta_1,\dots,\beta_m \in 2^\omega$, $f_\alpha|_V$ does not satisfy any non-zero differential polynomials of order less than $n$ with coefficients algebraic over the differential field generated by $\{h|_V : h \in H\} \cup \{f_{\beta_1}|_V,\dots,f_{\beta_m}|_V\}$.
    \end{itemize}
  \item\label{soln-trans} There exists a family $(g_\alpha)_{\alpha \in 2^\omega}$ of meromorphic functions on $U$ such that for any connected component $V$ of $U$, and any distinct $\alpha,\beta_1,\dots,\beta_m \in 2^\omega$, $g_\alpha|_V$ does not satisfy any non-zero differential polynomials with coefficients algebraic over the differential field generated by $\{h|_V : h \in H\} \cup \{g_{\beta_1}|_V,\dots,g_{\beta_m}|_V\}$.
  \end{enumerate}
\end{lem}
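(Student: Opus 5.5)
We mimic the proof of \cref{lem:Seidenberg-dense}, replacing the single local solution at each stage by a perfect family of mutually generic local solutions from \cref{fact:perf-soln}. Then we reindex the perfect sets by $2^\omega$ so the local families can be glued.

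For part (\ref{soln-non-trans}), enumerate a basis $(V_i)_{i<\omega}$ of open subsets of $U$ with $\bigcup_{i>n}V_i$ dense for every $n$. We build pairwise disjoint connected open sets $W_n \subseteq U$ by induction. At stage $n$, if $V_n\setminus \cl\bigcup_{j<n}W_j$ is empty, set $W_n=\varnothing$. Otherwise, let $O_n$ be a connected component of it.

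On $O_n$, let $K_n$ be the differential field generated by $\{h|_{O_n} : h\in H\}$. Since $H$ is countable and $O_n$ is connected, $K_n$ is a countable subfield of $\Kc(O_n)$ containing the coefficients of $p(x;h_1|_{O_n},\dots,h_k|_{O_n})$. By \cref{lem:diff-poly-temp}, this polynomial still has order $n_0\ge 1$; we assume $n$ in the statement denotes this specialized order.

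Apply \cref{fact:indep-solution} parts (\ref{generic-point}) and (\ref{solution}) to get $z_0\in O_n$, a connected neighborhood $W_n\subseteq O_n$ of $z_0$, and a parametrized family $\vec a\mapsto f_{\vec a}$ of solutions on $W_n$. Then apply \cref{fact:perf-soln} to get a non-empty perfect set $D_n$ of parameters. Every non-empty perfect subset of $\Cb^{n}$ contains a homeomorphic copy of $2^\omega$, so fix an injection $\iota_n : 2^\omega\to D_n$ and put $f^n_\alpha = f_{\iota_n(\alpha)}$.

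Distinct $\alpha$'s give distinct parameters, so the mutual genericity from \cref{fact:perf-soln} transfers directly. That fact is stated relative to $K$ restricted to $V$, so we apply it with $K=K_n$. Then restricting from $O_n$ to $W_n$ is an isomorphism of differential fields, because $O_n$ is connected and by the identity theorem, so the genericity statement holds on $W_n$ relative to $\{h|_{W_n}: h \in H\}$.

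Finally, set $W=\bigcup_n W_n$ and $f_\alpha=\bigcup_n f^n_\alpha$. The set $W$ is dense in $U$ exactly as in \cref{lem:Seidenberg-dense}. The connected components of $W$ are precisely the $W_n$: they are pairwise disjoint, open and connected, so each is a component. Hence both bullet points reduce to the stagewise statements, with the $m$ distinct $\beta_j$ handled uniformly because the same indexing $\alpha$ is used on every component.

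Part (\ref{soln-trans}) is easier, since no density reduction is needed: $g_\alpha$ must be defined on all of $U$. Enumerate the countably many connected components $(U_i)_{i<k}$ of $U$. On each $U_i$, let $K_i$ be the countable differential field generated by $H|_{U_i}$. Pick $z_i$ by \cref{fact:indep-solution} part (\ref{generic-point}), apply \cref{fact:perf-soln-trans} to get a perfect $D_i\subseteq[0,1]^\omega$, and embed $2^\omega$ into $D_i$. Let $g_\alpha$ be the function on $U$ whose restriction to $U_i$ is the corresponding $g_{\vec a}$. One point to check is that \cref{fact:perf-soln-trans} asks for a holomorphic function on $U_i$ with the prescribed Taylor series. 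This needs the series to converge on all of $U_i$; the coefficients $a_n/n!$ with $a_n \in [0,1]$ make it entire, so restricting to $U_i$ is fine.

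The main obstacle is conceptual bookkeeping rather than analysis: the genericity conclusions of Facts~\ref{fact:perf-soln} and \ref{fact:perf-soln-trans} are relative to a fixed countable $K$ on a fixed connected set. We must make sure $K$ contains all of $H$, not just the coefficients of $p$, and that genericity survives restriction to smaller connected sets. The latter follows because restriction between connected open sets is an injective differential field map, so algebraic dependence over the restricted field pulls back. The rest is the same diagonal exhaustion as before.
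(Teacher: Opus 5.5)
Your proposal is correct and is essentially the paper's proof: you rerun the exhaustion from Lemma~\ref{lem:Seidenberg-dense}, at each stage using Fact~\ref{fact:perf-soln} on a fresh connected piece to get a perfect (hence $2^\omega$-indexable) family of mutually generic local solutions relative to all of $H$, glue along the pairwise disjoint components, and handle part~(\ref{soln-trans}) analogously via Fact~\ref{fact:perf-soln-trans}. Your bookkeeping is slightly tighter than the paper's: treating the basic set $V_n$ at stage $n$ is what guarantees that $W$ is dense (the paper's ``smallest $i$ with $\bigcup_{j<n}W_j$ not dense in $V_i$'' rule could keep returning to the same $V_i$), and applying Fact~\ref{fact:perf-soln-trans} componentwise gives the $g_\alpha$ on all of $U$, as the statement requires.
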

\begin{proof}
  For (\ref{soln-non-trans}), we largely repeat the proof of \cref{lem:Seidenberg-dense} in the positive order case: Let $(V_i)_{i < \omega}$ be an enumeration of a basis of open subsets of $U$ with the property that for every $n$, $\bigcup_{i > n} V_i$ is dense in $U$.
  Define the following data by induction: a sequence $(W_n)_{n < \omega}$ of pairwise disjoint connected non-empty open subsets of $U$ and  $(f_{\alpha,n})_{\alpha\in 2^\omega, n < \omega}$ such that $f_{\alpha,n}$ is meromorphic on $W_n$.

  At stage $n$, given $W_j$ for $j < n$, find the smallest $i < \omega$ such that $\bigcup_{j < n} W_j$ is not dense in $V_i$. Let $O$ be a connected component of $V_i \setminus \cl \bigcup_{j < n} W_j$. By \cref{fact:perf-soln}, we can find a connected open subset $W_n \subseteq O$ and a family of meromorphic function $(f_{\alpha,n})_{\alpha \in 2^\omega}$ on $W_n$ satisfying the two bullet points in part (\ref{soln-non-trans}) of the statement of the lemma.

  Let $W = \bigcup_{n < \omega}W_n$. For each $\alpha \in 2^\omega$, let $f_\alpha = \bigcup_{n < \omega}f_{\alpha,n}$ be the unique meromorphic function on $W$ satisfying that $f_\alpha|_{W_n} = f_{\alpha,n}$ for each $n < \omega$. $(f_\alpha)_{\alpha \in 2^\omega}$ now satisfies the required conditions by construction.

  The proof of (\ref{soln-trans}) is similar, using \cref{fact:perf-soln-trans} in place of \cref{fact:perf-soln}.
\end{proof}

Finally, we can prove \cref{thm:main}.

\begin{proof}[Proof of {\cref{thm:main}}]
  Fix a finitely generated field $K \subseteq \Kden/\mfr$. We will argue that for any non-algebraic type $t(x) \in S_1(\acl(K))$, $\Kden/\mfr$ realizes a continuum-sized Morley sequence in $t$. Since $\DCF$ is $\aleph_0$-stable, this is enough to establish that $\Kden/\mfr$ is $2^{\aleph_0}$-saturated \cite[Prop.~7.21]{Pillay1983}.

  If realizations of $t(x)$ satisfy any non-zero differential polynomial with coefficients algebraic over $K$, then $t(x)$ is the unique generic type satisfying some differential polynomial $p(x)$ with coefficients in $K$. Let $p(x) = p_0(x;[[h_0]],\dots,[[h_k]])$ for some generic differential polynomial $p_0$ with each $[[h_i]]$ non-zero. Let $H$ be a finite set of meromorphic functions containing $\{h_0,\dots,h_k\}$ and such that $\{[[h]] : h \in H\}$ generates $K$. By the same argument as in the proof of \cref{thm:models-DCF}, we can find an open set $U$ with $1-[\onebf_U] \in \mfr$ such that each $h_i$ is nowhere identically vanishing on $U$. By shrinking $U$ (and replacing elements of $H$ with their restrictions to $U$) if necessary, we may assume without loss of generality that the domain of every element of $H$ is $U$. Applying \cref{lem:perf-soln-sat} part (\ref{soln-non-trans}) (with this choice of $H$), we now get an open $W \subseteq U$ dense in $U$ and a family $(f_\alpha)_{\alpha \in 2^\omega}$ of meromorphic functions satisfying the conditions in \cref{lem:perf-soln-sat}. \cref{lem:basic-stuff} now implies that for any distinct $\alpha,\beta_0,\dots,\beta_m \in 2^\omega$, $[[f_\alpha]]$ realizes the unique non-forking extension of $t(x)$ over $\acl(K\cup\{[[f_{\beta_0}]],\dots,[[f_{\beta_m}]]\})$ (since it is a solution of $p(x)$ that satisfies no differential polynomial of lower order with coefficients from that set). This implies that $([[f_\alpha]])_{\alpha \in 2^\omega}$ is a Morley sequence in $t$ over $K$.

  If realizations of $t(x)$ do not satisfy any non-zero differential polynomials with coefficients algebraic over $K$, we can similarly use \cref{lem:perf-soln-sat} part (\ref{soln-trans}) to get a Morley sequence $([[g_\alpha]])_{\alpha \in 2^\omega}$ in $t$ over $K$.

  Finally, it is a basic result of model theory that any two elementarily equivalent saturated structures of the same cardinality are isomorphic, implying that $\Kden/\mfr \cong \Kden/\mfr'$ for any maximal ideal $\mfr'$.
\end{proof}

\end{document}